\date{Last changed 30 September 2020 by PBG}
\def\aa{\mathfrak{a}^{\operatorname{Dol}}}
\def\AA{\mathfrak{A}^{\operatorname{Dol}}}
\def\Dol{{\operatorname{Dol}}}\def\pone{\text{\bf 1}}
\def\mm{\mathfrak{m}}\def\dvol{\operatorname{dvol}}
\def\ch{\operatorname{ch}}\def\Td{\operatorname{Td}}
\def\End{\operatorname{End}} 
\def\CC{\mathfrak{C}} 
\def\KK{\mathfrak{K}} 
\def\JJ{\mathfrak{J}} 
\def\RR{\mathfrak{R}} 
\def\SS{\mathfrak{S}} 
\def\TT{\mathfrak{T}} 
\def\UU{\mathfrak{U}} 
\def\VV{\mathfrak{V}} 
\newdimen\theight
\def\TeXref#1{%
             \leavevmode\vadjust{\setbox0=\hbox{{\tt
                     \quad\quad  {\small \textrm #1}}}%
             \theight=\ht0
             \advance\theight by \lineskip
             \kern -\theight \vbox to
             \theight{\rightline{\rlap{\box0}}%
             \vss}%
           }}%
\newtheorem{theorem}{Theorem}[section]
\newtheorem{lemma}[theorem]{Lemma}
\newtheorem{remark}[theorem]{Remark}
\newtheorem{example}[theorem]{Example}
\begin{document}
\def\id{\operatorname{id}}\def\dvol{\operatorname{dvol}}
\title[Derived Heat Trace Asymptotics]{Derived Heat Trace Asymptotics for the de Rham and Dolbeault complexes}
\begin{abstract}We examine the derived heat trace asymptotics in both the real and the complex settings for a generalized Witten perturbation. If the dimension
is even, in the real context we show the integral of the local density for the derived heat trace asymptotics is half the Euler characteristic
of the underlying manifold. In the complex context, we assume the underlying geometry is K\"ahler and show the integral of the
local density for the derived heat trace asymptotics defined by the Dolbeault complex is a characteristic number of the complex
tangent bundle and the twisting vector bundle. We identify this characteristic number if the real dimension is $2$ or $4$.
In both the real and complex settings, the local density differs from the corresponding characteristic class by a divergence term.
   \par\noindent \textbf{Keywords:}
   Witten deformation, local index density, de~Rham complex, Dolbeault complex, derived heat trace asymptotics.
\par\noindent\textbf{Subject Classification:} 58J20
\end{abstract}

\author
{J. \'Alvarez L\'opez and P. Gilkey}
\address{PBG: Mathematics Department, University of Oregon, Eugene OR 97403-1222, USA}
\email{gilkey@uoregon.edu}
\address{JAL:  Department of Geometry and Topology, Faculty of Mathematics,
University of Santiago de Compostela, 15782 Santiago de Compostela,
Spain}
\email{jesus.alvarez@usc.es}
\dedicatory{Tribute to Louis Nirenberg}

\maketitle

\section{Introduction}

\subsection{Motivation} Let $h$ be a smooth function on a compact Riemannian manifold $\mathcal{M}=(M,g)$
without boundary of dimension $m$. In this context, Witten~\cite{W82} defined a perturbed de~Rham differential 
$d_h:=d+\operatorname{ext}(dh)$
where $\operatorname{ext}(\cdot)$ denotes left exterior multiplication. Since $d_h=e^{-h}\circ d\circ e^h$, $d_h$ is gauge
equivalent to $d$ and consequently the Betti numbers are unchanged. Let $\operatorname{int}(\cdot)$ denote left
interior multiplication. The perturbed de~Rham co-differential is given by $\delta_h:=\delta+\operatorname{int}(dh)$
 and the perturbed Laplacian is given by $\Delta_h:=d_h\delta_h+\delta_hd_h$. For a generic metric $g$,
 $\Delta_h$ and $\Delta$ will not be gauge equivalent and their spectra will be different. 
 Witten obtained the Morse inequalities by analyzing the spectral asymptotics of the family of operators $\Delta_{sh}$ as $s\to\infty$ when
 $h$ is a Morse function;  we also refer to subsequent work of Bismut and Zhang~\cite{BZ92} and of Helffer and Sj\"ostrand~\cite{HS85}.
 One can replace $dh$ by a real closed 1-form $\omega$ to define
 $d_\omega:=d+\operatorname{ext}(\omega)$. Since $d_\omega$ need not be gauge equivalent to $d$, the twisted Betti numbers
 $\beta^p_\omega$ can be different. However the numbers $\beta^p_{s\omega}$ defined by the family $s\rightarrow s\omega$ for
 $s\in\mathbb{R}$ have well
 defined ground values which are called the Novikov numbers and only depend upon the cohomology
 class defined by $\omega$ in de~Rham cohomology. If $\omega$ is
 of Morse type, these Novikov numbers satisfy a generalized Morse inequality; we refer to work of
 Braverman and Farber~\cite{BF97}, Pazhitnov~\cite{Pa87}, and other authors \cite{BH01,BH08,HM06,Mi15}.

We studied the local index density for Witten deformation of the de Rham and Dolbeault complexes in two previous papers.
In \cite{ALG20}, we used invariance theory to show that the local index density of the twisted de Rham complex
is the Euler form if $m$ is even, and vanishes if $m$ is odd; in particular, it does not depend on $\omega$. A different proof of 
this result was given proviously by the first author, Kordyukov, and Leichtnam~\cite{AKL}, where it was
 applied to study certain trace formulas for foliated flows. Let $(E,h)$ be a Hermitian vector bundle over
 a K\"ahler manifold $(M,g,J)$. In analogy to the real setting, one may introduce the perturbed Dolbeault operator
 $\bar\partial_{\bar\omega}=\bar\partial+\operatorname{ext}(\bar\omega)$ mapping
 $C^\infty(E\otimes\Lambda^{p,q}(M))\rightarrow C^\infty(E\otimes\Lambda^{p,q+1}(M))$ 
 where $\omega$ is a $\partial$ closed form of type $(1,0)$. If $\omega=0$, it follows from the work of 
 Atiyah, Bott, and Patodi~\cite{ABP73} and the work
 the second author~\cite{G73,G73a} that the local index density is 
 $\{\operatorname{Td}(M,g,J)\wedge\operatorname{ch}(E,h)\}_{m}$ as we shall explain presently; 
 this result can fail if the metric is not assumed to be K\"ahler.
 In \cite{ALG20a}, we showed that the index density of $\bar\partial_{\bar\omega}$ exhibits non-trivial dependence on $\omega$;
 we summarize the results of \cite{ALG20,ALG20a} below in Theorem~\ref{T1.1}.
 
G\"unther and Schimming~\cite{GS77} defined a sequence of secondary heat invariants for the de~Rham complex of
which the first is called the derived heat invariant. In a different form, 
this invariant was used by Ray and Singer~\cite{RS71} to study the analytic torsion. In the perturbed setting, define 
$\mathfrak{a}^{\operatorname{deR}}_{m,n}:=\sum_p(-1)^pp\cdot a_{m,n}(x,\Delta_\omega^p)$, where $\Delta_\omega^p$
is the restriction of $\Delta_\omega$ to forms of degree $p$, and $a_{m,n}(x,\Delta_\omega^p)$ is its heat trace invariant of order
$n$. Another version of this invariant, $\mathfrak{a}^{\operatorname{Dol}}_{m,n}$, is similarly defined for the Witten-Novikov
pertubation of the Dolbeault complex in the K\"ahler setting. Our results for these invariants are given below in Theorem~\ref{T1.3}.
We summarize those results as follows.
Consider first the Riemannian setting. Let $\mathcal E_{m,2k}$ denote the Euler invariant of order $2k$ (see Section~\ref{S1.3}). 
We prove that $\mathfrak{a}^{\operatorname{deR}}_{m,n}$ vanishes if $n<m-1$, and exhibits a nontrivial dependence on $\omega$ 
for even $n\ge m$. We show that $\mathfrak{a}^{\operatorname{deR}}_{m,m-1}=\mathcal E_{m,m-1}$ if $m$ is odd, and that
$\int_M\mathfrak{a}^{\operatorname{deR}}_{m,m}\dvol=\frac m2\int_M\mathcal{E}_{m,m}\dvol$ if $m$ is even. In particular,
$\int_M\mathfrak{a}^{\operatorname{deR}}_{m,m}\dvol$ is independent of $\omega$, which is relevant in \cite{AKLa}
to study certain zeta invariants associated to closed $1$-forms (our original motivation).
In the K\"ahler setting, the situation is more complicated. We show that $\mathfrak{a}^{\operatorname{Dol}}_{m,n}=0$ if
$n<m-2$, 
and that $\mathfrak{a}^{\operatorname{Dol}}_{m,n}$ exhibits a nontrivial dependence on $\omega$ for even $n\ge m-2$.
We describe $\mathfrak{a}^{\operatorname{Dol}}_{m,m-2}$ as a perturbation of
$\frac1{(\mm-1)!}g(\operatorname{Td}(M,g,J)\wedge\operatorname{ch}(E,h),\Omega^{\mm-1})$. We prove that
$\int_M\mathfrak{a}^{\operatorname{Dol}}_{m,m}\dvol$ is a characteristic invariant independent of $\omega$.
We determine $\int_M\mathfrak{a}^{\operatorname{Dol}}_{m,m}\dvol$ in general if $m=2$ or $m=4$ 
in terms of characteristic classes of the complex tangent bundle of $M$ and the twisting bundle $E$. This is a global result as the
local invariants $\aa_{m,m}$ exhibit non-trivial dependence on the twisting $(1,0)$ form $\omega$.

\subsection{The real setting}
Let $\mathcal{M}:=(M,g,\omega)$ where $(M,g)$ is an $m$-dimensional Riemannian manifold without boundary which is equipped with
an auxiliary real closed 1-form $\omega$. Let $\operatorname{dvol}$ be the associated Riemannian measure on $M$.
Let $\JJ_{m,n}^p$ be the space of smooth $p$-form valued 
invariants which are homogeneous of weight $n$
in the derivatives of the metric and $\omega$; we refer to Section~\ref{S2} for a precise definition. These spaces
vanish for $p+n$ odd.
The First Theorem of Invariants of Weyl~\cite{W46} shows that such
invariants can be expressed in terms of contractions of indices in pairs and alternations of $p$ indices where the indices
range from $1$ to $m$. Let $R_{ijkl}$ be the components of the curvature tensor and let $\omega_i$ be the components of $\omega$.
The scalar curvature $\tau\in\JJ_{m,2}^0$, 
and norm squared $\|\omega\|^2\in\JJ_{m,2}^0$ take the form
$$
\tau=\sum_{i,j,k,\ell=1}^mg^{i\ell}g^{jk}R_{ijkl}\quad\text{and}\quad\|\omega\|^2=\sum_{j,k=1}^mg^{jk}\omega_j\omega_k\,.
$$
The restriction map 
$r:\JJ_{m,n}^p\rightarrow\JJ_{m-1,n}^p$
 is defined by restricting the range of summation to range from 1 to $m-1$ rather than from 1 to $m$. 
 We shall discuss the restriction map further in Section~\ref{S2}.
 
 \subsection{The Euler form}\label{S1.3}
Let $\mathcal{E}_{m,2k}$ be the integrand of the Chern-Gauss-Bonnet
Theorem~\cite{C44}:
\begin{eqnarray*}
&&\mathcal{E}_{m,2k}:=\sum_{i_1,\dots,i_k,j_1,\dots,j_k=1}^m\frac{(-1)^{k}}{8^{k}\pi^{k}k!}g(e^{i_1}\wedge\dots\wedge e^{i_k},e^{j_1}\wedge\dots\wedge e^{j_k})\\
&&\qquad\qquad\qquad\qquad\qquad\qquad\qquad R_{i_1i_2j_1j_2}\dots R_{i_{k-1}i_kj_{k-1}j_k}\in\JJ_{m,2k}^0\,.
\end{eqnarray*}
This element is universal; $r(\mathcal{E}_{m,2k})=\mathcal{E}_{m-1,2k}$ for any $m$. One has, for example,
$$
\mathcal{E}_{m,2}:=(4\pi)^{-1}\tau\text{ and }\mathcal{E}_{m,4}:=(32\pi^2)^{-1}\{\tau^2-4\|\rho\|^2+\|R\|^2\}
$$
where $\rho$ is the Ricci tensor. Similar formulas for $\mathcal{E}_{m,6}$ and $\mathcal{E}_{m,8}$ can be found in
Pekonen~\cite{P88}. Let $\delta$ be the co-derivative
$$
\delta:\JJ_{m,n-1}^{1}\rightarrow\JJ_{m,n}^0\,.
$$

\subsection{The K\"ahler setting} Let $\mathcal{K}:=(M,g,J,E,h,\omega)$ where $(M,g,J)$ is a K\"ahler manifold
of real dimension $m=2\mm$,
$(E,h)$ is a holomorphic vector bundle over $M$ which is equipped with a Hermitian fiber metric $h$, and $\omega$ is a $\partial$ closed
$(1,0)$ form. Let $\Omega$ be the K\"ahler form of $\mathcal{K}$; $\operatorname{dvol}=\frac1{\mm!}\Omega^\mm$. 
Let $\KK_{m,n}^{k}$ 
be the corresponding space of invariants in the complex setting; we refer to Section~\ref{S2}
for precise details. These spaces vanish for $k+n$ odd. Again, we have a natural restriction map
$r:\KK_{m,n}^{k}\rightarrow\KK_{m-2,n}^{k}$.
We can pair form valued invariants with an appropriate power of the K\"ahler form to obtain scalar invariants;
if $P\in\KK_{m,2k}^{2k}$ then $g(P,\Omega^k)\in\KK_{m,2k}^0$. For example, if $k=\mm$, then
the Hodge $\star$ operator takes the form
$\star P=\frac1{\mm!}g(P,\Omega^\mm)$.

\subsection{The ring of characteristic forms} Let $c_k$, $\ch_k$, and $\Td_k$ be the $k^{\operatorname{th}}$ Chern class,
Chern character, and Todd class, respectively (see~\cite{H66}).
For example,
$$\begin{array}{llll}
\ch_0=\dim(E),&\ch_1=c_1,&\ch_2=\frac12(c_1^2-2c_2),&\ch_3=\frac16(c_1^3-3c_1c_2+3c_3),\\
\textstyle\Td_0=1,&\Td_1={\frac {c_{1}}{2}},&
\quad\Td_2={\frac {c_{1}^{2}+c_{2}}{12}},&
\Td_3={\frac {c_{1}c_{2}}{24}}\,.
\end{array}$$
Let $T_cM:=(TM,J)$ be the associated complex tangent bundle. We decompose the graded ring of characteristic forms
$$
\CC_m:=\mathbb{C}[\ch_1(T_cM),\dots,\ch_\mm(T_cM),\ch_1(E,h),\dots,\ch_\mm(E,h)]
$$
into homogeneous components
$\CC_m=\oplus_k\CC_m^{2k}$ where
$\CC_m^{2k}\subset\KK_{m,2k}^{2k}$.

\subsection{The Witten deformation} In the real setting, let $d_\omega:=d+\operatorname{ext}(\omega)$ be
the Witten deformation of the exterior derivative; the adjoint is then $\delta_\omega:=\delta+\operatorname{int}(\omega)$. 
We may decompose $\Delta_{\mathcal{M}}=\oplus_p\Delta_{\mathcal{M}}^p$
where $\Delta_{\mathcal{M}}:=d_\omega\delta_\omega+\delta_\omega d_\omega$
is the associated Laplacian and where
$\Delta_{\mathcal{M}}^p$ is a self-adjoint elliptic operator of Laplace type on $C^\infty(\Lambda^p(M))$. Similarly,
in the complex setting, let $\bar\partial_\omega=\bar\partial+\operatorname{ext}(\bar\omega)$ define the
deformation of the Dolbeault operator; the adjoint is 
then $\delta^{\prime\prime}_\omega=\delta^{\prime\prime}+\operatorname{int}(\omega)$.
Decompose
$\Delta_{\mathcal{K}}=\oplus_{p,q}\Delta_{\mathcal{K}}^{p,q}$ where $\Delta_{\mathcal{K}}:=2(\bar\partial_{\bar\omega}\delta^{\prime\prime}_\omega+\delta^{\prime\prime}_\omega\bar\partial_{\bar\omega})$ is the associated Laplacian and where
$\Delta_{\mathcal{K}}^{p,q}$ is a self-adjoint operator
of Laplace type on $C^\infty(\Lambda^{p,q}(M)\otimes E)$.

\subsection{The local index density}
If $\Delta$ is an operator of Laplace type on a compact Riemannian manifold of dimension $m$,
let $a_{m,n}(x,\Delta)$ be the local heat trace asymptotics. If $f\in C^\infty(M)$, then
$$
\operatorname{Tr}_{L^2}\{fe^{-t\Delta}\}\sim\sum_{n=0}^\infty\int_Mf(x)a_{m,n}(x,\Delta)\dvol\quad\text{as}\quad t\downarrow0\,.
$$
The invariants $a_{m,n}(x,\Delta)$ vanish for $n$ odd.
We introduce the local index densities for the de~Rham complex and Dolbeault complex by setting:
\begin{eqnarray*}
&&\displaystyle a_{m,n}^{\operatorname{deR}}(x):=\sum_p(-1)^pa_{m,n}(x,\Delta_{\mathcal{M}}^p)
\in\JJ_{m,n}^0,\\
&&\displaystyle a_{m,n}^{\operatorname{Dol}}(x):=\sum_p(-1)^pa_{m,n}(x,\Delta_{\mathcal{K}}^{0,p})
\in\KK_{m,n}^0\,.
\end{eqnarray*}
A cancellation argument due to Bott yields:
\begin{equation}\label{E1.a}\begin{array}{l}
\displaystyle\int_Ma_{m,n}^{\operatorname{deR}}(x)\dvol(g)=\left\{\begin{array}{ll}0&\text{ for }n\ne m\\
\text{Euler characteristic}(M)&\text{ for }n=m\end{array}\right\},\\[.15in]
\displaystyle\int_Ma_{m,n}^{\operatorname{Dol}}(x)\dvol(g)=\left\{\begin{array}{ll}0&\text{ for } n\ne m\\
\text{arithmetic genus}(M,E)\hspace{.1cm}&\text{ for }n=m\end{array}\right\}\,.
\end{array}\end{equation}
Let $\Im(\omega)$ be the imaginary part of $\omega$.
Set $\Theta:=\displaystyle\sum_k\frac{1}{k!\pi^k}\{d\Im(\omega)\}^k$.
We have the following previous results~\cite{ALG20,ALG20a} of the authors.
\begin{theorem}\label{T1.1}
$a_{m,n}^{\operatorname{deR}}=\left\{\begin{array}{cll}0&\hspace{2.9cm}&\text{\rm if }n<m\\
\mathcal{E}_{m,m}&&\text{\rm if }n=m\end{array}\right\}$,
\medbreak\hglue 2.2cm$a_{m,n}^{\operatorname{Dol}}=\left\{\begin{array}{ll}0&\text{\rm if }n<m\\
\frac1{\mm!}g(\Omega^{\mm},\left\{\operatorname{Td}(T_cM)\wedge\operatorname{ch}(E)\wedge\Theta\right\}_{m})
&\text{\rm if }n=m\end{array}\right\}$. 
\end{theorem}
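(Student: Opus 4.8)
The plan is to recognise each Witten-deformed operator as an \emph{undeformed} one twisted by a line bundle. In the real case $d_\omega=d+\operatorname{ext}(\omega)$ is the differential of the de~Rham complex of $M$ with coefficients in the flat line bundle carrying the connection $d+\omega$ (flat because $d\omega=0$), and $\Delta_{\mathcal M}$ is the associated Hodge Laplacian for the trivial fibre metric; moreover $\Delta_{\mathcal M}=(d_\omega+\delta_\omega)^2$ since $d_\omega^2=\operatorname{ext}(d\omega)=0$, and $d_\omega+\delta_\omega=(d+\delta)+\hat c(\omega)$ with $\hat c(\omega):=\operatorname{ext}(\omega)+\operatorname{int}(\omega)$ is still an operator of Dirac type. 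Hence the local form of the McKean--Singer cancellation for Dirac-type operators (\cite{ABP73}) gives $a^{\operatorname{deR}}_{m,n}=0$ for $n<m$, with $a^{\operatorname{deR}}_{m,m}$ the local index density; and by \eqref{E1.a} the difference $a^{\operatorname{deR}}_{m,m}-\mathcal E_{m,m}$ integrates to zero on every $(M,g,\omega)$ (both integrate to $\chi(M)$, the second by the classical Chern--Gauss--Bonnet theorem \cite{C44}), so it is the divergence of a universal $(g,\omega)$-invariant. The first formula thus reduces to the vanishing of that divergence, which I would take from the invariance-theoretic cancellation of \cite{ALG20}: feeding Gilkey's universal formula for the heat invariants of a Laplace-type operator into the alternating sum $\sum_p(-1)^p$, the $\omega$-dependent invariants of weight $m$ cancel identically.

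In the K\"ahler case the device is exact. Since $\omega$ is $\partial$-closed, $(\bar\partial+\operatorname{ext}(\bar\omega))^2=\operatorname{ext}(\overline{\partial\omega})=0$, so by the Koszul--Malgrange theorem $\bar\partial+\bar\omega$ is an integrable $\bar\partial$-operator on the $C^\infty$-trivial complex line bundle and defines a holomorphic line bundle $L_\omega$ over $M$; equip $L_\omega$ with the trivial Hermitian metric $h_0$. The constant unit frame of $L_\omega$ then identifies $C^\infty(\Lambda^{0,*}(M)\otimes E)$ isometrically with $C^\infty(\Lambda^{0,*}(M)\otimes E\otimes L_\omega)$ carrying $h\otimes h_0$, and under this identification $\bar\partial_{\bar\omega}$ becomes the Dolbeault operator $\bar\partial_{E\otimes L_\omega}$ (one checks that $\delta^{\prime\prime}_\omega=\delta^{\prime\prime}+\operatorname{int}(\omega)$ is exactly its formal adjoint for $h\otimes h_0$) and $\Delta_{\mathcal K}$ becomes the corresponding Dolbeault Laplacian. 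Applying the classical local Riemann--Roch theorem --- the $\omega=0$ case, due to Atiyah--Bott--Patodi and the second author (\cite{ABP73,G73,G73a}) --- to $(E\otimes L_\omega,h\otimes h_0)$ over the K\"ahler manifold $(M,g,J)$ yields $a^{\operatorname{Dol}}_{m,n}=0$ for $n<m$ and $a^{\operatorname{Dol}}_{m,m}=\frac1{\mm!}g(\Omega^\mm,\{\operatorname{Td}(T_cM)\wedge\operatorname{ch}(E\otimes L_\omega,h\otimes h_0)\}_m)$. Since $\operatorname{ch}(E\otimes L_\omega,h\otimes h_0)=\operatorname{ch}(E,h)\wedge\operatorname{ch}(L_\omega,h_0)$ and the Chern connection of $(L_\omega,h_0)$ has connection form $\bar\omega-\omega=-2i\,\Im(\omega)$, hence curvature $-2i\,d\Im(\omega)$ and first Chern form $\tfrac1\pi d\Im(\omega)$, one obtains $\operatorname{ch}(L_\omega,h_0)=\exp\bigl(\tfrac1\pi\,d\Im(\omega)\bigr)=\Theta$, which is the second formula.

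The genuinely content-bearing step is the de~Rham equality $a^{\operatorname{deR}}_{m,m}=\mathcal E_{m,m}$ (equivalently, the vanishing of the divergence term above): the naive ``twist by a flat bundle'' argument does not close because for a real $\omega$ the trivial fibre metric on the flat bundle is \emph{not} parallel --- its unitary reduction is the trivial connection, but the de~Rham--Dirac operator then acquires the genuine zeroth order term $\hat c(\omega)$, which could a priori contribute an exact form to the index density --- and ruling this out needs the invariance theory of \cite{ALG20}. By contrast, in the Dolbeault case $h_0$ is a bona fide Hermitian metric on the holomorphic bundle $L_\omega$, so the classical local index theorem applies verbatim and the entire $\omega$-dependence is packaged in $\operatorname{ch}(L_\omega,h_0)=\Theta$. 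The remaining points --- the integrability of $\bar\partial+\bar\omega$, the identification of $\delta_\omega$ and $\delta^{\prime\prime}_\omega$ with the formal adjoints for the trivial fibre metrics, and the normalization in $\operatorname{ch}(L_\omega,h_0)=\Theta$ (in particular the factor $\tfrac1\pi$ and the appearance of $d\Im(\omega)$ rather than $d\Re(\omega)$ or $\partial\omega$) --- are routine.
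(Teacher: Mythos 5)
A preliminary remark: the present paper does not prove Theorem~\ref{T1.1} at all --- it is quoted from \cite{ALG20} and \cite{ALG20a}, where it is obtained by invariance theory (the restriction map $r$, the injectivity and kernel statements of Lemma~\ref{L2.4}, and explicit low-dimensional normalizations). So your argument should be measured against those proofs. Your Dolbeault half is a genuinely different route and, as far as I can check, a correct one: since $\partial\omega=0$, the operator $\bar\partial+\operatorname{ext}(\bar\omega)$ is an integrable $\bar\partial$-operator on the smoothly trivial line bundle, Koszul--Malgrange produces the holomorphic bundle $L_\omega$, the constant frame identifies $\Delta_{\mathcal K}^{0,q}$ with the ordinary Dolbeault Laplacian of $E\otimes L_\omega$ for the metric $h\otimes h_0$, and the classical K\"ahler local index theorem of \cite{ABP73,G73,G73a} applies to that bundle. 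Your normalization is right: the Chern connection of $(L_\omega,h_0)$ has connection form $\bar\omega-\omega$, so $c_1(L_\omega,h_0)=\frac1\pi d\Im(\omega)$ and $\operatorname{ch}(L_\omega,h_0)=\Theta$, recovering $\star\{\operatorname{Td}\wedge\operatorname{ch}(E)\wedge\Theta\}_m$. This explains conceptually why $\Theta$ is a multiplicative, closed correction, something the invariance-theoretic proof delivers only a posteriori; what it does not deliver is the structural information about $\ker(r)$ that the paper reuses in Theorem~\ref{T1.3}.

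The de~Rham half of your proposal is not an independent proof. The assertion that the McKean--Singer/ABP cancellation ``gives $a^{\operatorname{deR}}_{m,n}=0$ for $n<m$'' does not apply off the shelf: $d_\omega+\delta_\omega=(d+\delta)+\hat c(\omega)$ carries a zeroth-order term $\hat c(\omega)=\operatorname{ext}(\omega)+\operatorname{int}(\omega)$ that is not induced by a compatible Clifford (or unitary twisting) connection --- precisely the obstruction you yourself identify in your closing paragraph --- so neither the vanishing for $n<m$ nor the identification $a^{\operatorname{deR}}_{m,m}=\mathcal E_{m,m}$ follows from \cite{ABP73} without the additional filtration/invariance-theory argument. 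Since you end by citing \cite{ALG20} for exactly that cancellation, the real case of your write-up amounts to a citation of the theorem being proved; that is no worse than what the present paper does, but it should be stated as such rather than presented as a consequence of the classical local index theorem. (Also, your intermediate claim that a weight-$m$ invariant in dimension $m$ whose integral always vanishes must be a divergence is not covered by the conformal-variation argument of Lemma~\ref{L5.1}, which requires $n\ne m$; fortunately you do not actually use it.)
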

\begin{remark}\rm We note that this result for $a_{m,n}^{\operatorname{Dol}}$
can fail in the Hermitian setting. Although Equation~(\ref{E1.a}) continues to hold if $(M,g,J)$
is only assumed Hermitian, it is necessary to restrict to the context of K\"ahler geometry to identify the local index density
with a characteristic form even if $E$ is trivial and $\omega=0$. We refer to Gilkey,  Nik\v cevi\'c,\ and Pohjanpelto~\cite{GNP97} for details.
\end{remark}

\subsection{Derived invariants}
The following is the first in a sequence of invariants introduced by G\"unther and Schimming~\cite{GS77} in the real category
(see the discussion on page 181 of Gilkey~\cite{G95}); 
a related invariant appears in the discussion by Ray and Singer~\cite{RS71} of analytic torsion. 
Define
\begin{eqnarray*}
&&\mathfrak{a}^{\operatorname{deR}}_{m,n}:=\sum_p(-1)^pp\cdot a_{m,n}(x,\Delta^p_{\mathcal{M}})
\in\JJ_{m,n}^0\,,\\
&&\mathfrak{a}^{\operatorname{Dol}}_{m,n}:=\sum_p(-1)^pp\cdot a_{m,n}(x,\Delta^{0,p}_{\mathcal{K}})
\in\KK_{m,n}^0\,.
\end{eqnarray*}

\goodbreak\begin{theorem}\label{T1.3}\rm
\ \begin{enumerate}
\item Let $\mathcal{M}=(M,g,\omega)$ be a Riemannian manifold which is equipped with an auxiliary closed 1-form $\omega$.
\begin{enumerate}
\item If $n<m-1$, then $\mathfrak{a}^{\operatorname{deR}}_{m,n}=0$.
\item If $m$ is odd, then $\mathfrak{a}^{\operatorname{deR}}_{m,m-1}=\mathcal{E}_{m,m-1}$ is independent of $\omega$.
\item If $m$ is even, then $\mathfrak{a}^{\operatorname{deR}}_{m,m}=\frac m2\mathcal{E}_{m,m}+\delta Q_{m,m-1}^1$ for 
$Q_{m,n-1}^1\in\JJ_{m,n-1}^{1}$.
\end{enumerate}
\item Let $\mathcal{K}=(M,g,J,E,h,\omega)$ where $(M,g,J)$ is a K\"ahler manifold, $(E,h)$ is a holomorphic Hermitian
vector bundle over $M$, and $\omega$ is an auxiliary $\partial$ closed form of type $(1,0)$.
\begin{enumerate}
\item If $n<m-2$, $\mathfrak{a}^{\operatorname{Dol}}_{m,n}(\mathcal{K})=0$.
\item If $n=m-2$, 
{$\mathfrak{a}^{\operatorname{Dol}}_{m,m-2}=\frac1{(\mm-1)!}
g(\operatorname{Td}(M,g,J)\wedge\operatorname{ch}(E,h)\wedge\Theta,\Omega^{\mm-1})$.}
\par\noindent There exists $Q_{m,m-3}^1\in\KK_{m,m-3}^1$ so
\par\noindent\quad
$\mathfrak{a}^{\operatorname{Dol}}_{m,m-2}=\frac1{(\mm-1)!}
g(\operatorname{Td}(M,g,J)\wedge\operatorname{ch}(E,h),\Omega^{\mm-1})+\delta Q_{m,m-3}^1$.
\item We have $
\mathfrak{a}^{\operatorname{Dol}}_{m,m}=\frac1{\mm!}g(R_{m}^m,\Omega^{\mm})+\delta Q_{m,m-1}^1$ for 
 $Q_{m,m-1}\in\KK_{m,m-1}^1$ and $R_{m}^m\in\CC_{m}^m$.
 \end{enumerate}\end{enumerate}
\end{theorem}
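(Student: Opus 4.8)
The plan is to combine the invariance theory of the spaces $\JJ^\bullet_{m,n}$ and $\KK^\bullet_{m,n}$ — the restriction maps $r$, the characterization of $\ker r$ by the Euler form (respectively by the top characteristic forms $\CC^{m}_m$), and the description of the divergence subspaces — with the already-known values of the un-derived index densities from Theorem~\ref{T1.1}, a product reduction, and the Hodge/Serre duality. The central reduction is a Leibniz rule for products: for a Riemannian product $\mathcal M_1\times\mathcal M_2$, the decomposition $\Lambda^p=\bigoplus_{p_1+p_2=p}\Lambda^{p_1}\boxtimes\Lambda^{p_2}$ and the multiplicativity of heat kernels give $\mathfrak{a}^{\operatorname{deR}}_{m,n}=\sum_{n_1+n_2=n}(\mathfrak{a}^{\operatorname{deR}}_{m_1,n_1}a^{\operatorname{deR}}_{m_2,n_2}+a^{\operatorname{deR}}_{m_1,n_1}\mathfrak{a}^{\operatorname{deR}}_{m_2,n_2})$, and similarly for products of K\"ahler manifolds. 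Taking $\mathcal M_2$ a flat circle, where $a^{\operatorname{deR}}_{1,n}(S^1)=0$ and $\mathfrak{a}^{\operatorname{deR}}_{1,n}(S^1)$ is a nonzero constant when $n=0$ and $0$ otherwise (and a flat complex $1$-torus in the K\"ahler case), and using that evaluating a universal invariant on a product with flat data computes its restriction, this collapses to $r(\mathfrak{a}^{\operatorname{deR}}_{m,n})=c\,a^{\operatorname{deR}}_{m-1,n}$ and $r(\aa_{m,n})=c'\,a^{\Dol}_{m-2,n}$: the restriction of the derived invariant is controlled by the un-derived index density one (respectively two) dimension(s) lower.

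For part (1) I would use in addition that the Hodge star conjugates $\Delta^p_{\mathcal M}$ to $\Delta^{m-p}_{\mathcal M}$ up to replacing $\omega$ by $-\omega$, which leaves the heat invariants unchanged, so that for even $m$ one gets $\mathfrak{a}^{\operatorname{deR}}_{m,n}(\omega)+\mathfrak{a}^{\operatorname{deR}}_{m,n}(-\omega)=m\,a^{\operatorname{deR}}_{m,n}$. Then: for $n<m-1$ the reduction gives $r(\mathfrak{a}^{\operatorname{deR}}_{m,n})=c\,a^{\operatorname{deR}}_{m-1,n}=0$ by Theorem~\ref{T1.1}, and $r$ is injective on $\JJ^0_{m,n}$ for $n<m$, whence (1a); for odd $m$ and $n=m-1$, $r(\mathfrak{a}^{\operatorname{deR}}_{m,m-1})=c\,\mathcal E_{m-1,m-1}=r(c\,\mathcal E_{m,m-1})$, so injectivity gives (1b), and the value of the constant is a direct computation; for even $m$ and $n=m$, the duality identity shows $P:=\mathfrak{a}^{\operatorname{deR}}_{m,m}-\tfrac m2\mathcal E_{m,m}$ is odd in $\omega$ while the reduction shows $r(P)=c\,a^{\operatorname{deR}}_{m-1,m}$ is a divergence; subtracting a lift of that divergence makes $P$ congruent modulo $\delta\JJ^{1}_{m,m-1}$ to an element of $\ker r=\langle\mathcal E_{m,m}\rangle$, which must vanish since $P(0)=0$ but $\mathcal E_{m,m}$ is not a divergence, giving (1c).

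Part (2) is parallel, with the flat torus in place of the circle, Serre duality in place of Poincar\'e duality (it conjugates $\Delta^{0,q}_{\mathcal K}$ for $(E,\omega)$ to $\Delta^{0,\mm-q}_{\mathcal K}$ for $(E^{*}\otimes K_M,-\omega)$), the ring $\CC_m$ in place of the Euler form, and the K\"ahler identities $d\Omega=\delta\Omega^{k}=0$ used to push exact factors into divergences. Statement (2a) follows as above from $r(\aa_{m,n})=c'a^{\Dol}_{m-2,n}=0$ and injectivity of $r$ on $\KK^0_{m,n}$ for $n<m$. For (2b), the reduction with Theorem~\ref{T1.1} in complex dimension $\mm-1$ together with injectivity of $r$ identifies $\aa_{m,m-2}$ with $\tfrac1{(\mm-1)!}g(\Td(M,g,J)\wedge\ch(E,h)\wedge\Theta,\Omega^{\mm-1})$; writing $\Theta=1+\sum_{k\ge1}\tfrac1{k!\pi^{k}}(d\Im\omega)^{k}$, so that each higher term is $d$ of a form while $\Td\wedge\ch$ is closed and $\delta\Omega^{\mm-1}=0$, shows the difference with the $\Theta$-free expression is $\delta Q^1_{m,m-3}$. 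For (2c), $r(\aa_{m,m})=c'a^{\Dol}_{m-2,m}$ is a divergence, so $\aa_{m,m}$ lies, modulo $\delta\KK^{1}_{m,m-1}$, in $\ker(r\colon\KK^0_{m,m}\to\KK^0_{m-2,m})$, which is spanned by the $\tfrac1{\mm!}g(R,\Omega^{\mm})$ with $R\in\CC^m_m$; this yields $\aa_{m,m}=\tfrac1{\mm!}g(R^m_m,\Omega^{\mm})+\delta Q^1_{m,m-1}$. Finally, since a top characteristic form is detected modulo divergences by its characteristic numbers, $R^m_m$ is determined by evaluating $\sum_q(-1)^q q\,a_m(\Delta^{0,q}_{\mathcal K})$ at $\omega=0$ on products of projective spaces twisted by line bundles; this is carried out for $m=2$ and $m=4$, where $\CC^m_m$ has dimension $2$ and $5$ and Serre duality reduces the number of independent test integrals, whereas for general $m$ the local Dolbeault invariants are too unwieldy, which is why $R^m_m$ is stated abstractly.

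The hard part will be the divergence claims in (1c) and (2b)--(2c). Through the product reduction they become assertions that the un-derived index densities $a^{\operatorname{deR}}_{m-1,n}$ and $a^{\Dol}_{m-2,n}$ are total divergences for every order $n$ above the critical one — expected because the relevant integrals vanish identically (McKean--Singer, Riemann--Roch) and no characteristic term survives at weight $\neq\dim$, but proving it, together with pinning down $\ker r$ precisely as $\langle\mathcal E_{m,m}\rangle$ respectively $\{\tfrac1{\mm!}g(R,\Omega^{\mm}):R\in\CC^m_m\}$ and establishing the auxiliary vanishing $\mathfrak{a}^{\operatorname{deR}}_{m,m-2}=0$ and $\aa_{m,n}=0$ for $n<m-2$, is where the bulk of the invariance-theoretic work goes.
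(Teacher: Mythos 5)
Your overall architecture---the product/restriction identity $r(\mathfrak{a}_{m,n})=c\,a_{m-1,n}$ (your circle/torus Leibniz argument is exactly the paper's proof of Lemma~\ref{L2.5}), injectivity of $r$ below the critical weight, the kernel of $r$ at the critical weight, and Poincar\'e/Serre duality to pin down constants---matches the paper's, and (1a), (1b), (2a) and the $\Theta=1+d\Phi$ part of (2b) go through as you describe. But there are two genuine gaps. First, every divergence claim in (1c) and (2c) rests on the assertion that a local invariant of weight $n\ne\dim$ whose integral is independent of the structures must equal $\delta$ of a $1$-form valued invariant; you explicitly defer this as ``the hard part'' and never supply an argument, so the engine of (1c) and (2c) is missing. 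The paper proves it (Lemma~\ref{L5.1}) by a conformal variation: differentiate $P(x,e^{2\varepsilon f}g,\omega)\dvol(e^{2\varepsilon f}g)$ at $\varepsilon=0$, integrate by parts against arbitrary $f$ to kill the non-divergence part, and then use the homothety identity $P_1(1,g,\omega)=(m-n)P$ to solve for $P$ itself as a divergence. Note also that in the complex case this variation leaves the K\"ahler class, so one must pass temporarily to the Hermitian category, where Equation~(\ref{E1.a}) still holds; you do not address this.

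Second, and more serious, in (2c) you assert that $\ker(r\colon\KK^0_{m,m}\to\KK^0_{m-2,m})$ is spanned by the $\tfrac1{\mm!}g(R,\Omega^{\mm})$ with $R\in\CC^m_m$. It is not: by Lemma~\ref{L2.4}\,(2b) the kernel is $\star\TT^m_m$ with $\TT_m=\mathbb{C}[\ch_k(T_cM),\ch_k(E,h),d\omega,d\bar\omega,\omega,\bar\omega]$, so it contains monomials in the undifferentiated variables $\omega_\alpha,\bar\omega_{\bar\beta}$, such as $\star(\omega\wedge\bar\omega\wedge\ch_{\mm-1})$, which are neither characteristic forms nor visibly divergences. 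The paper closes this hole with a gauge renormalization (Lemma~\ref{L6.1}): conjugating $\Delta^{p,q}$ by $e^{\xi}$ with $\xi=\omega_\alpha(P)z^\alpha-\bar\omega_{\bar\beta}(P)\bar z^{\bar\beta}$ shows the heat coefficients do not involve $\omega_\alpha,\bar\omega_{\bar\beta}$ at all, whence $\aa_{m,m}-\delta Q\in\star\SS^m_m$ for the subring $\SS_m$ omitting those variables, and one then checks $\SS^m_m=\CC^m_m+d\{\omega\wedge\SS^{m-2}_m+\bar\omega\wedge\SS^{m-2}_m\}$ so the residual terms are again co-exact. Without this step your argument cannot conclude that $R^m_m$ lies in $\CC^m_m$, i.e.\ that $\int_M\aa_{m,m}\dvol$ is a characteristic number independent of $\omega$. (Your final remarks about evaluating $R^m_m$ on products for $m=2,4$ belong to Theorem~\ref{T1.4}, not to the statement at hand.)
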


Assertion~(1c) and Assertion~(2c) show that 
$\int_M\mathfrak{a}_{m,m}^{\operatorname{deR}}\dvol$ is a characteristic number which is
 independent of $\omega$ in the Riemannian setting and $\int_M\mathfrak{a}_{m,m}^{\operatorname{Dol}}\dvol$ is a characteristic number which is
 independent of the structures in the K\"ahler setting. In Theorem~\ref{T1.3}~(2b), we may eliminate $\Theta$ at the cost of
introducing an additional divergence term. The divergence terms are in general present. We have, for example, $Q_{2,1}^1=(2\pi)^{-1}\Theta$ in Assertion~(2b).
We will establish the following result in Section~\ref{S7}. It shows, in particular, that $\aa_{m,m}$ is not a
multiple of $a_{m,m}^{\Dol}$.

\begin{theorem}\label{T1.4}
Use Theorem~\ref{T1.3} to express $\aa_{m,m}=\frac1{\mm!}g(R_m^m,\Omega^\mm)+\delta Q_{m,m-1}^1$
for $Q_{m,m-1}^1\in\KK_{m,m-1}^1$ and  $R_{m}^m\in\CC_{m}^m$.
\begin{enumerate}
\item Let $\mathcal{M}=(M,g,J,E,h)$ where $(M,g,J)$ is a K\"ahler manifold and $(E,h)$ is a Hermitian vector bundle bundle.
\begin{enumerate}
\item If $m=2$, $R_2^2
=\frac13c_1(T_cM)\ch_0(E)+\frac12c_1(E)$.
\item If $m=4$, $R_4^4=(\Td_2+\frac1{24}c_1^2)(T_cM)\ch_0(E)
+\frac7{12}c_1(T_cM)c_1(E)+\ch_2(E)$.
\end{enumerate}
\item Let  $\mathcal{M}=\mathcal{M}_1\times\dots\times\mathcal{M}_\mm$
where $\mathcal{M}_i=(M_i,g_i,J_i)$ are Riemann surfaces and $(E_i,h_i)$ are Hermitian line bundles.
\ \begin{enumerate}
\item If $(M_i,g_i,J_i)$ are flat tori,
$R_m^m(\mathcal{M})=
\frac12\mm a_{m,m}^{\Dol}(\mathcal{M})=\frac12\mm\ch_\mm(E)$.
\item If $(E_i,h_i)$ is trivial for all $i$,
$R_m^m(\mathcal{M})=\frac{2}3\mm a_{m,m}^{\Dol}(\mathcal{M})=\frac{2}3\mm\Td_\mm(T_cM)$.
\end{enumerate}
\end{enumerate}
\end{theorem}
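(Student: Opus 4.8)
The plan is to establish the two assertions in the reverse of the stated order, since Assertion~(2) will follow from a product formula together with the case $m=2$ of Assertion~(1). First note that $R_m^m\in\CC_m^m$ is well defined: by Theorem~\ref{T1.3}~(2c) the difference $\aa_{m,m}-\frac1{\mm!}g(R_m^m,\Omega^\mm)$ is a divergence, while a nonzero element of $\CC_m^m$ is never a divergence (it has a nonzero Chern number on some closed K\"ahler manifold carrying a suitable bundle), so $R_m^m$ is determined by $\aa_{m,m}$ modulo divergences. Since the dependence of $\aa_{m,m}$ on $\omega$ is carried entirely by the term $\delta Q_{m,m-1}^1$ of Theorem~\ref{T1.3}~(2c), I may and will take $\omega=0$ throughout the computation of $R_m^m$.

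For $m=2$ one has $\mm=1$, so $\aa_{2,2}=-a_{2,2}(x,\Delta^{0,1}_{\mathcal{K}})$ where $\Delta^{0,1}_{\mathcal{K}}$ acts on $\Lambda^{0,1}(M)\otimes E$. Writing $\Delta^{0,1}_{\mathcal{K}}=\nabla^*\nabla-\mathcal E$ by Bochner--Kodaira, with $\mathcal E$ the explicit curvature endomorphism built from the curvatures of $(M,g,J)$ and of $(E,h)$, the classical identity $a_{m,2}(x,\Delta)=(4\pi)^{-m/2}\operatorname{tr}(\tfrac16\tau\cdot\id+\mathcal E)$ exhibits $\aa_{2,2}$ as an explicit curvature polynomial; by Theorem~\ref{T1.3}~(2c) with $\mm=1$ it equals $\alpha\,c_1(T_cM)\ch_0(E)+\beta\,c_1(E)$ modulo an exact term. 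The two universal constants $\alpha,\beta$ are then pinned down by integrating over $M=\mathbb{C}P^1$ with $E=\mathcal O$ and with $E=\mathcal O(1)$ --- equivalently by combining the Riemann--Roch identity $\int_M a_{2,2}(\cdot,\Delta^{0,0}_{\mathcal{K}})\dvol-\int_M a_{2,2}(\cdot,\Delta^{0,1}_{\mathcal{K}})\dvol=\chi(M,E)$ (McKean--Singer) with one direct evaluation of $\int_M a_{2,2}(\cdot,\Delta^{0,0}_{\mathcal{K}})\dvol$ --- and come out $\alpha=\tfrac13$, $\beta=\tfrac12$, which is Assertion~(1a).

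For Assertion~(2), suppose $M=M_1\times M_2$ with the product K\"ahler structure and $E=\operatorname{pr}_1^*E_1\otimes\operatorname{pr}_2^*E_2$. The K\"unneth splitting $\Lambda^{0,p}(M)\otimes E=\bigoplus_{p_1+p_2=p}\big(\Lambda^{0,p_1}(M_1)\otimes E_1\big)\boxtimes\big(\Lambda^{0,p_2}(M_2)\otimes E_2\big)$ identifies the Dolbeault Laplacian on the $p$-part with $\bigoplus_{p_1+p_2=p}(\Delta^{0,p_1}_1\otimes1+1\otimes\Delta^{0,p_2}_2)$, so the heat operator of the product is the tensor product of those of the factors. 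Writing $Z_i(t):=\sum_p(-1)^p\operatorname{Tr}_{L^2}e^{-t\Delta^{0,p}_i}$ and $W_i(t):=\sum_p(-1)^pp\operatorname{Tr}_{L^2}e^{-t\Delta^{0,p}_i}$, one gets the Leibniz identity $W(t)=W_1(t)Z_2(t)+Z_1(t)W_2(t)$. By McKean--Singer each $Z_i(t)\equiv\chi(M_i,E_i)$ is constant, so comparing $t^0$-coefficients and iterating over $\mathcal{M}=\mathcal{M}_1\times\dots\times\mathcal{M}_\mm$ of Riemann surfaces with line bundles $E_i$ yields
\[
\int_M\aa_{m,m}\dvol=\sum_{i=1}^{\mm}\Big(\prod_{j\ne i}\chi(M_j,E_j)\Big)\int_{M_i}\aa_{2,2}\dvol .
\]
Inserting $\int_{M_i}\aa_{2,2}\dvol=\tfrac13\chi_{\mathrm{top}}(M_i)+\tfrac12\deg E_i$ from the $m=2$ case and $\chi(M_i,E_i)=\deg E_i+\tfrac12\chi_{\mathrm{top}}(M_i)$ (Riemann--Roch), and using $\int_M\ch_\mm(E)=\prod_i\deg E_i$ and $\int_M\Td_\mm(T_cM)=\prod_i\tfrac12\chi_{\mathrm{top}}(M_i)$, the right side becomes $\tfrac12\mm\int_M\ch_\mm(E)$ in case~(a) (where $\chi_{\mathrm{top}}(M_i)=0$) and $\tfrac23\mm\int_M\Td_\mm(T_cM)$ in case~(b) (where $\deg E_i=0$). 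Finally, on these manifolds the only curvature present is that of the line bundle $E$ in case~(a), respectively of $T_cM$ in case~(b), so the characteristic form $R_m^m$ must restrict to a universal scalar multiple of $\ch_\mm(E)$, respectively $\Td_\mm(T_cM)$; the displayed integral identities fix those scalars, and together with Theorem~\ref{T1.1} (which gives $a_{m,m}^{\Dol}=\ch_\mm(E)$, resp.\ $\Td_\mm(T_cM)$, on these manifolds) this yields the asserted identities $R_m^m=\tfrac12\mm\,a_{m,m}^{\Dol}$, resp.\ $\tfrac23\mm\,a_{m,m}^{\Dol}$.

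For $m=4$ (Assertion~(1) there) I would feed the three Bochner--Kodaira decompositions $\Delta^{0,p}_{\mathcal{K}}=\nabla^*\nabla-\mathcal E_p$, $p=0,1,2$, on a K\"ahler surface into the universal formula for $a_{4,4}(x,\Delta)$ (see \cite{G95}), form $\aa_{4,4}=\sum_p(-1)^pp\,a_{4,4}(\cdot,\Delta^{0,p}_{\mathcal{K}})$, and simplify: the alternating sum collapses the fibrewise traces over $\Lambda^{0,p}$ into characteristic combinations of the curvatures of $T_cM$ and $E$ --- the same cancellation that, for the unweighted supertrace, produces $\Td\wedge\ch$ in Theorem~\ref{T1.1}. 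After discarding the divergence part ($\CC_4^4$ is five-dimensional, spanned e.g. by $c_1^2(T_cM)\ch_0(E)$, $c_2(T_cM)\ch_0(E)$, $c_1(T_cM)c_1(E)$, $c_1^2(E)$, $c_2(E)$) one reads off $R_4^4$; equivalently its five coefficients are determined by evaluating $\int_M\aa_{4,4}\dvol$ on a spanning family, of which three evaluations are already supplied by the product-formula computation with $\mm=2$ (on flat-torus products $R_4^4$ must equal $\ch_2(E)$, and on trivially twisted surface products $\tfrac43\Td_2(T_cM)$), with $\mathbb{C}P^2$ carrying $\mathcal O(k)$ supplying a fourth and one rank-$2$ example such as $(T_c(\mathbb{C}P^2),h)$ the fifth, the latter being needed to separate $c_1^2(E)$ from $c_2(E)$. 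I expect this last step to be the main obstacle: correctly assembling the endomorphisms $\mathcal E_p$, running them through the long $a_{4,4}$ formula, carrying out the $\sum_p(-1)^pp$ contraction without a sign or combinatorial error, and then cleanly isolating the genuine characteristic form from the divergence remainder --- using the product-formula evaluations to fix as many coefficients as possible is what keeps the residual computation manageable.
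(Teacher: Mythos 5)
Your treatment of Assertions~(1a) and~(2) is sound and essentially the paper's own route: the Leibniz identity $W=W_1Z_2+Z_1W_2$ with $Z_i\equiv\chi(M_i,E_i)$ is exactly Lemma~\ref{L7.2} (proved there via the generating function $\AA_{m,n}(s)$ and $\partial_s|_{s=1}$), and pinning $\aa_{2,2}$ down from McKean--Singer plus one explicit evaluation of $a_{2,2}(\Delta^{0,0}_E)$ reproduces the paper's constants $\alpha^1=-\frac13$, $\beta^1=-\frac12$ (the paper gets the relation between the $q=0$ and $q=1$ coefficients from Serre duality rather than Bochner--Kodaira, but both work).

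Assertion~(1b) is a genuine gap. What you offer there is a strategy --- feed the Bochner--Kodaira endomorphisms into the universal $a_{4,4}$ formula, or equivalently evaluate $\int_M\aa_{4,4}$ on five independent examples including $\mathbb{C}P^2$ with $\mathcal O(k)$ and a rank-$2$ bundle --- and you yourself identify this as ``the main obstacle'' without carrying it out; evaluating $\sum_p(-1)^pp\,a_{4,4}(\Delta^{0,p})$ on $\mathbb{C}P^2$ by the long $a_{4,4}$ formula is precisely the computation the paper is structured to avoid, and no method for it is supplied. You also miss the two ideas that make the paper's argument close without any such computation. First, the heat invariants are additive under $E=E_1\oplus E_2$, so $\aa_{4,4}$ is \emph{linear} in $\ch(E)$: the ansatz is $P_2(T_cM)\ch_0(E)+\alpha\,c_1(T_cM)c_1(E)+\beta\,\ch_2(E)$ with only four unknown constants, and there is no $c_1^2(E)$-versus-$c_2(E)$ ambiguity to resolve, hence no need for a rank-$2$ example. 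Your product evaluations then already give $\beta=1$ and $\alpha=\frac7{12}$. Second, the remaining two coefficients of $P_2=a\,c_1^2+b\,c_2$ are fixed by the Serre-duality trick: for the self-dual twist $E=\pone\oplus\Lambda^{2,0}(M)$ one has $a_{4,4}(\Delta^{0,q}_E)=a_{4,4}(\Delta^{0,2-q}_E)$, whence $2\aa_{4,4}=2a_{4,4}^{\Dol}$, and comparing with Riemann--Roch (using $\ch_1(E)=-c_1(T_cM)$, $\ch_0(E)=2$) yields $P_2=\Td_2+\frac1{24}c_1^2$ as an identity of characteristic numbers on all K\"ahler surfaces, hence as classes. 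Without these two steps (or an actual execution of the $a_{4,4}$ computation you defer), the value of $R_4^4$ is not established.
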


\section{Spaces of invariants}\label{S2}

\subsection{Spaces of local formula in the real setting} It is necessary to be a bit careful concerning
what we mean by a local formula. Let $x=(x^1,\dots,x^m)$ be local coordinates centered at a point $P$ of
$M$. Let $U=(i_1,\dots,i_a)$ be a collection of indices. Decompose $\omega=\omega_idx^i$. Set $|U|:=a$,
$\partial_i:=\frac{\partial}{\partial x^i}$,
$$\begin{array}{lll}
\partial_x^U:=\partial_{i_1}\dots\partial_{i_a},&
g_{ij}:=g(\partial_i,\partial_j),&
g_{ij/U}:=\partial_x^Ug_{ij},\\[0.05in]
\omega_{i/U}:=\partial_x^U\omega_i,&
\operatorname{weight}\{g_{ij/U}\}=|U|,&
\operatorname{weight}\{\omega_{i/U}\}=1+|U|.
\end{array}$$
Let $M_m(\mathbb{R})$ be the ring of $m\times m$ real matrices.
Let $\mathcal{O}_m\subset M_m(\mathbb{R})$ be the open subset of matrices $g$ so that
$g$ defines a positive definite inner product. Let 
$$
\RR:=C^\infty(\mathcal{O}_m)[g_{ij/U},\omega_i,\omega_{i/U}]_{|U|>0}\,.
$$
These are the local formulas in the derivatives of the metric and of $\omega$ with coefficients which
are smooth functions of the Riemannian metric $g$ which we will be considering. The weight induces
a natural grading on $\RR$ and we may decompose $\RR_m=\oplus_n\RR_{m,n}$ into the polynomials
which are homogeneous of weight $n$. If $P\in\RR_m$, we can evaluate $P$ in a coordinate system in the
obvious fashion; we say $P$ is {\it invariant} if the value of $P$ is independent of the particular local
coordinate system chosen and we let $\JJ_m\subset\RR_m$ be the ring of invariant local formulae; we may decompose
$\JJ_m=\oplus_n\JJ_{m,n}$ where $\JJ_{m,n}\subset\RR_{m,n}$. The space of $p$-form valued invariants
$\JJ_{m,n}^p$ is defined similarly.

Clearly we can express the curvature tensor $R$ and its covariant derivatives in terms of the derivatives of the
metric. Conversely, in geodesic coordinates, we can express the derivatives of the metric in terms of the
covariant derivatives of the curvature tensor. The {\it weight} of the curvature tensor $R$ is $2$ since
it is linear in the second derivatives of the metric and quadratic in the first derivatives of the metric.
We increase the weight by 1 for every explicit covariant derivative which
appears. Thus, for example, the scalar curvature $\tau$ and
$\|\omega\|^2$ have weight 2 while the square of the norms $\|\rho\|^2$ and $\|R\|^2$ of the Ricci tensor
and full curvature tensor, respectively,
have weight 4. Similarly, $\|\nabla R\|^2$ has weight 6 and $\mathcal{E}_{m,2k}$ has weight $2k$.

\subsection{Spaces of local formulae in the complex setting}\label{S2.2}
The situation is considerably more delicate here and we must proceed with some care. 
Let $(M,g,J)$ be a Hermitian manifold; we assume $J^*g=g$ but do not impose the K\"ahler condition.
Let $(E,h)$ a holomorphic vector bundle over $M$ of dimension $\ell$.
Fix a point $P$ of $M$. Choose local holomorphic coordinates
$\vec z=(z^1,\dots,z^{\mm})$ centered at $P$ and a local holomorphic frame 
$\vec s=(s_1,\dots,s_\ell)$ for $E$. Let $U=(\alpha_1,\dots,\alpha_a)$
and $V=(\beta_1,\dots,\beta_b)$ be collections of indices. 
Expand $\omega=\omega_\alpha dz^\alpha$ and $\bar\omega=\bar\omega_{\bar\beta}d\bar z^{\bar\beta}$.
Set
$$\begin{array}{llll}
|U|:=a,&|V|:=b,&
\partial_\alpha:=\frac{\partial}{\partial z^\alpha},&
\partial_{z;U}=\partial_{\alpha_1}\dots\partial_{\alpha_a},\\[0.05in]
\partial_{\bar\beta}:=\frac{\partial}{\partial\bar z^{\bar\beta}},&\partial_{\bar z;\bar V}
=\partial_{\bar\beta_1}\dots\partial_{\bar\beta_b},&
g_{\alpha\bar\beta}:=g(\partial_\alpha,\partial_{\bar\beta}),&
h_{p\bar q}:=h(s_p,s_q)\,.
\end{array}$$
Introduce the following notation for the derivatives of the structures involved:
\begin{equation}\label{E2.a}\begin{array}{ll}
g_{\alpha\bar\beta/U\bar V}:=\partial_{z;U}\partial_{\bar z;\bar V}g(\partial_\alpha,\partial_{\bar\beta}),&
h_{p\bar q/U\bar V}:=\partial_{z;U}\partial_{\bar z;\bar V}h(s_p,s_q),\\[0.05in]
\omega_{\alpha/U\bar V}:=\partial_{z;U}\partial_{\bar z;\bar V}\omega_\alpha,&
\bar\omega_{\bar\beta/U\bar V}:=\partial_{z;U}\partial_{\bar z;\bar V}\bar\omega_{\bar\beta}.
\end{array}\end{equation}
If $|U|=0$, there are no holomorphic derivatives, if $|V|=0$, there are no anti-holomorphic derivatives, and if
$|U|+|V|=0$, there are no derivatives at all.  We have
$$
\bar\partial\omega=-\omega_{\alpha/\bar\beta}dz^\alpha\wedge d\bar z^{\bar\beta}\text{ and }
\partial\bar\omega=\bar\omega_{\bar\beta/\alpha}dz^\alpha\wedge d\bar z^{\bar\beta}\,.
$$
Consequently, the variables $\{\omega_\alpha,\omega_{\alpha/\bar\beta},\bar\omega_{\bar\beta},
\bar\omega_{\bar\beta/\alpha}\}$ are tensorial
unlike the remainder of the variables defined in Equation~(\ref{E2.a}).  Define
$$\begin{array}{ll}
\operatorname{weight}\{g_{\alpha\bar\beta/U\bar V}\}=
\operatorname{weight}\{h_{p\bar q/U\bar V}\}=|U|+|V|,\\[0.05in]
\operatorname{weight}\{\omega_{\alpha;U\bar V}\}=
\operatorname{weight}\{\bar\omega_{\bar\beta;U\bar V}\}=1+|U|+|V|.
\end{array}$$
Let $M_k(\mathbb{C})$ be the ring of $k\times k$ complex matrices.
Let $\mathcal{U}\subset M_\mm(\mathbb{C})\otimes M_\ell(\mathbb{C})$ be the open subset consisting of all 
the matrices $(g,h)$
so that $g$ and $h$ define positive definite Hermitian inner products. We consider the polynomial ring
$$
\mathcal{P}_m:=C^\infty(\mathcal{U})[g_{\alpha\bar\beta/U\bar V},h_{p\bar q/U\bar V},
\omega_{\alpha/U\bar V},\bar\omega_{\bar\beta/U\bar V},
\omega_\alpha,\bar\omega_{\bar\beta}]_{|U|+|V|>0}\,.
$$
As in the real setting, we say that $P\in\mathcal{P}_m$ is {\it invariant}
if the evaluation is independent of the particular
coordinate system $\vec z$ and frame $\vec s$ chosen. Let $\KK_m$ be the ring
of invariants in the K\"ahler context; $\KK_m$ is a graded ring and we may
use the weight to decompose $\KK_m=\oplus_n\KK_{m,n}$
into the polynomials which are
homogeneous of weight $n$. For example, $a_{m,n}(\Delta^{p,q})\in\KK_{m,n}$.
The spaces $\KK_{m,n}^p$ of $p$-form valued invariants are defined similarly. For
example, $d\omega\in\KK_{m,2}^2$.

\subsection{Homotheties}
The weight of a polynomial describes its behaviour under homotheties. 
The following is immediate from the definition and
could be used to give an equivalent definition of the weight.
\begin{lemma}\label{L2.1}\rm\ 
\begin{enumerate}
\item Let $P\in\JJ_{m,k}^p$. Then $P(x,c^2g,\omega)=c^{p-k}P(x,g,\omega)$.
\item Let $P\in\KK_{m,k}^p$. Then $P(x,c^2g,J,E,h,\omega)=c^{p-k}P(x,g,J,E,h,\omega)$.
\end{enumerate}
\end{lemma}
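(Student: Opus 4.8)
The plan is to prove both assertions by a single device: combine the rescaling $g\mapsto c^2g$ with a dilation of the underlying coordinates, chosen so that the \emph{undifferentiated} metric at the base point is left fixed while every jet variable picks up a clean power of $c$. The point of insisting that the undifferentiated metric be restored is that the coefficients of an element of $\RR$ (resp. of $\mathcal{P}_m$) are arbitrary smooth functions of the undifferentiated components $g_{ij}$ (resp. of $(g_{\alpha\bar\beta},h_{p\bar q})$); under a bare rescaling these coefficients transform by $f\mapsto f(c^2\,\cdot\,)$, which is not a power of $c$, so the homogeneity cannot be read off directly. Only after the value of the undifferentiated metric at the centre is put back does the weight grading govern the scaling.

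First I would treat assertion (1). Fix the base point and local coordinates $x=(x^1,\dots,x^m)$ centred there, and set $\tilde g:=c^2g$. Introduce new coordinates $y:=cx$, so that $\partial_{y^i}=c^{-1}\partial_{x^i}$ and $dy^i=c\,dx^i$. A direct jet computation of $\tilde g$ and of $\omega$ in the $y$ chart then gives, at the centre,
\begin{equation*}
\tilde g^{(y)}_{ij}=g^{(x)}_{ij},\qquad
\tilde g^{(y)}_{ij/V}=c^{-|V|}\,g^{(x)}_{ij/V},\qquad
\omega^{(y)}_{i/V}=c^{-(1+|V|)}\,\omega^{(x)}_{i/V}.
\end{equation*}
Thus the undifferentiated metric is unchanged, while each generator of $\RR_m$ of weight $w$ is multiplied by $c^{-w}$ and every coefficient function (a function of the undifferentiated metric alone) is left untouched. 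Now I would invoke invariance: writing $P=\sum_I P_I\,dx^I$ with $P_I\in\RR_{m,k}$ homogeneous of weight $k$ (the components need not individually be invariant), the fact that $P$ is a well-defined $p$-form built from $(\tilde g,\omega)$ means its value at the centre may be computed in either chart, so that
\begin{equation*}
P(x,c^2g,\omega)=\sum_I P_I(\tilde g^{(x)})\,dx^I=\sum_I P_I(\tilde g^{(y)})\,dy^I.
\end{equation*}
Substituting the scalings above, weight-$k$ homogeneity yields $P_I(\tilde g^{(y)})=c^{-k}P_I(g^{(x)})$, while $dy^I=c^{p}\,dx^I$ contributes the form factor; combining these gives $P(x,c^2g,\omega)=c^{p-k}P(x,g,\omega)$.

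For assertion (2) I would run the identical argument in holomorphic coordinates, replacing the dilation by $w:=cz$ (for real $c$), so that $\partial_{w^\alpha}=c^{-1}\partial_{z^\alpha}$, $\partial_{\bar w^{\bar\beta}}=c^{-1}\partial_{\bar z^{\bar\beta}}$, and $dw^\alpha=c\,dz^\alpha$, $d\bar w^{\bar\beta}=c\,d\bar z^{\bar\beta}$. Since $w=cz$ is holomorphic it preserves $J$ and one may retain the holomorphic frame $\vec s$, so that $h$ and its undifferentiated components are untouched; exactly as before one finds at the centre
\begin{equation*}
\tilde g^{(w)}_{\alpha\bar\beta/U\bar V}=c^{-(|U|+|V|)}g^{(z)}_{\alpha\bar\beta/U\bar V},\qquad
h^{(w)}_{p\bar q/U\bar V}=c^{-(|U|+|V|)}h^{(z)}_{p\bar q/U\bar V},\qquad
\omega^{(w)}_{\alpha/U\bar V}=c^{-(1+|U|+|V|)}\omega^{(z)}_{\alpha/U\bar V},
\end{equation*}
with the undifferentiated $g_{\alpha\bar\beta}$ and $h_{p\bar q}$ fixed. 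The same weight-$k$ homogeneity, together with the factor $c^{p}$ picked up by the real $p$-form basis built from $dz^\alpha,d\bar z^{\bar\beta}$, yields $P(x,c^2g,J,E,h,\omega)=c^{p-k}P(x,g,J,E,h,\omega)$.

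The only genuine obstacle is conceptual rather than computational: one must pair the metric rescaling with precisely the dilation $y=cx$ (resp. $w=cz$) so that the undifferentiated metric returns to its original value at the centre. Neither operation alone suffices — a bare rescaling alters the smooth coefficient functions nonhomogeneously, and a bare dilation rescales $g_{ij}$ at the centre — so it is exactly their combination that exhibits the weight as the exponent of the scaling. Everything else is the routine jet bookkeeping recorded above, which is why the statement is immediate once this device is in place.
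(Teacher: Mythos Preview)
Your argument is correct. The paper offers no proof beyond declaring the lemma ``immediate from the definition'' (and remarking that the scaling law could serve as an equivalent definition of the weight), so your write-up supplies exactly the verification the paper omits; the device of pairing the rescaling $g\mapsto c^2g$ with the coordinate dilation $y=cx$ (resp.\ $w=cz$) so as to fix the undifferentiated metric at the centre is the natural way to see why homogeneity in the weight grading translates into the exponent $p-k$.
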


\begin{example}\rm It is worth giving some examples to illustrate Lemma~\ref{L2.1}. We work in the real context.
Let $\|\omega\|_g^2:=g^{ij}\omega_i\omega_j$. Since $\omega_i$
has weight 1, $\|\omega\|_g^2$ has weight 2 so $\|\omega\|_g^2\in\JJ_{m,2}^0$. 
If we rescale the metric and set $g_{c;ij}=c^2g_{ij}$, then $g^{ij}_c=c^{-2}g^{ij}$
so $\|\omega\|_{g_c}^2=c^{-2}\|\omega\|_g$. The components of the curvature tensor $R_{ijk}{}^\ell$ have weight 2
since they are linear in the 2-jets of the metric and quadratic in the 1-jets of the metric. The Levi-Civita connection is
unchanged if we rescale the metric and thus $R_{g_c;ijk}{}^\ell=R_{g;ijk}{}^\ell$. Let $\tau_g\in\JJ_{m,2}^0$ be the scalar curvature. We compute that
$\tau_{g_c}=g_c^{jk}R_{ijk}{}^i=c^{-2}g^{jk}R_{ijk}{}^i=c^{-2}\tau_g$. We have $d\tau_g\in\JJ_{m,3}^1$ and
$d\tau_{g_c}=c^{-2}d\tau_g$.
\end{example} 
\subsection{The restriction map}
As noted in the introduction, a spanning set for the space of invariants in the real setting
 is given by contraction of indices in pairs and alternations of
indices where the indices range from $1$ to $m$; there is an analogous result in the complex setting.
If $P\in\JJ_{m,n}^p$, $r(P)\in\JJ_{m-1,n}^p$ is defined by restricting the range of summation
to range from $1$ to $m-1$. Although Weyl's First Theorem of Invariants yields a spanning set, it is not a basis as one has the
Bianchi identities and higher order analogues.
Thus it is not immediately obvious that the restriction map is independent
of the particular expression of an invariant in terms of a Weyl spanning set. 
To get around this difficulty, it is convenient to use a slightly different more geometric formalism.
Let $\mathbb{T}^1=(S^1,d\theta^2,0)$ be the circle with the flat structures. If $\mathcal{N}$ is an $m-1$ dimensional
structure, one forms the $m$ dimensional structure
$$
\mathcal{M}=\mathcal{N}\times\mathbb{T}^1:=(N\times S^1,ds^2_N+d\theta^2,\pi_1^*\omega_N)\,.
$$
Let $i_\theta(x)=(x,\theta)$ be an inclusion of $N$ in $N\times S^1$; 
the particular basepoint $\theta$ chosen is irrelevant since $\mathbb{T}^1$ is homogeneous.
Then $r(P)\in\JJ_{m-1,n}^p$ is characterized by the identity
\begin{equation}\label{E2.b}
r(P)(\mathcal{N}):=i_\theta^*P(\mathcal{N}\times\mathbb{T}^1)\,.
\end{equation}
In the complex setting, the invariance theory is that of the unitary group
rather than the orthogonal group and one sums over pairs of holomorphic and anti-holomorphic indices where the indices
range from 1 to $\mm$. 
Instead of considering $\mathbb{T}^1$ one considers the flat 2-torus $\mathbb{T}^2$ but the remainder of the analysis
is the same and one obtains:

\begin{lemma}\label{L2.3}
\ \begin{enumerate}
\item $r$ is a well defined map from  $\JJ_{m,n}^p$ onto $\JJ_{m-1,n}^p$. 
\item $r$ is a well defined map from  $\KK_{m,n}^p$ onto $\KK_{m-2,n}^p$. 
\end{enumerate}
\end{lemma}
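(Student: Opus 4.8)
The plan is to abandon the naive ``restrict the range of summation'' recipe as a \emph{definition} and instead take the geometric identity~\eqref{E2.b}, $r(P)(\mathcal N):=i_\theta^*P(\mathcal N\times\mathbb T^1)$, as the definition of $r$. One then has to verify three things: that the right-hand side is a genuine element of $\JJ_{m-1,n}^p$ (which simultaneously gives well-definedness, since the formula is intrinsic and makes no reference to how $P$ is written), that on a Weyl spanning set it reproduces the index-range restriction (so that the informal description in the introduction is accurate), and that the resulting map is surjective. The complex assertion will then follow from the same argument with the flat circle replaced by the flat K\"ahler $2$-torus $\mathbb T^2$.

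For the first point I would note that $P$ is by hypothesis natural under isomorphisms of $m$-dimensional structures, that the assignment $\mathcal N\mapsto\mathcal N\times\mathbb T^1$ and the slice inclusion $i_\theta$ are natural in $\mathcal N$ (an isomorphism $\phi$ of $(m-1)$-dimensional structures induces $\phi\times\id$, which commutes with $i_\theta$), so $r(P)(\mathcal N)$ depends only on the isomorphism class of $\mathcal N$ and is invariant; the weight is unchanged because the only nonzero new datum on the product is the weight-zero entry $g_{\theta\theta}=1$; and $i_\theta^*(d\theta)=0$, so the pullback of a $p$-form on $N\times S^1$ is a $p$-form on $N$. Hence $r(P)\in\JJ_{m-1,n}^p$. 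To match this with the combinatorial recipe I would work in an adapted product chart $(x^1,\dots,x^{m-1},\theta)$: since the metric is the product $ds_N^2+d\theta^2$ and $\omega$ is pulled back from $N$, at the slice every monomial of $P$ in which a torus index occurs vanishes (curvature, $\omega$, and all their jets with a $\theta$-index are zero, and monomials with a free alternated $\theta$-leg are killed by $i_\theta^*$), while the data carrying only indices $\le m-1$ --- the inverse metric stays block diagonal, and the curvature and $\omega$ restrict to those of $\mathcal N$ --- reduce to the corresponding data of $\mathcal N$. Thus $i_\theta^*P(\mathcal N\times\mathbb T^1)$ is exactly the expression obtained from $P$ by restricting all summations to $\{1,\dots,m-1\}$.

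Surjectivity is then formal. Given $Q\in\JJ_{m-1,n}^p$, Weyl's First Theorem of Invariants~\cite{W46} writes $Q$ as a sum of contractions and alternations of the curvature tensor, $\omega$, and their covariant derivatives, with all indices running over $\{1,\dots,m-1\}$; reading the \emph{same} expression with indices running over $\{1,\dots,m\}$ produces a natural, hence $O(m)$-invariant, $p$-form valued formula $P\in\JJ_{m,n}^p$, and by the previous step $r(P)$ is obtained from $P$ by restricting the summations, which recovers $Q$. The K\"ahler case is identical: replace $\mathbb T^1$ by the flat $2$-torus $\mathbb T^2$ and extend $(E,h)$ and $\omega$ by pullback along the first projection; the product is again K\"ahler with $(E,h)$ holomorphic Hermitian, the invariance theory is that of the unitary group with summations over $\{1,\dots,\mm\}$, the torus directions carry no holomorphic or anti-holomorphic jets, so~\eqref{E2.b} again reads off the index-range restriction --- now lowering the real dimension by $2$ --- and surjectivity follows as before.

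The step I expect to be the only real obstacle is the vanishing claim in the second paragraph: one must check that \emph{every} variable in the polynomial ring $\RR_m$ (respectively $\mathcal P_m$) that involves a torus direction actually vanishes on the flat product, so that~\eqref{E2.b} genuinely coincides with the index-range restriction rather than with some unrelated but well-defined operation. In the complex setting this is exactly where the K\"ahler hypothesis enters --- for a merely Hermitian product one could have jets $g_{\alpha\bar\beta/U\bar V}$ with a torus index in $\alpha$, $\bar\beta$, $U$, or $V$ failing to vanish --- whereas for the flat K\"ahler product all such jets, together with all mixed curvature components, vanish identically. Granting this, the three verifications are routine and the lemma follows.
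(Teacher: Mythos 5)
Your proposal is correct and takes essentially the same route as the paper: the paper likewise discards the index-range recipe as a definition, takes the intrinsic formula $r(P)(\mathcal N)=i_\theta^*P(\mathcal N\times\mathbb T^1)$ (with the flat $\mathbb T^2$ in the complex case) as the characterization of $r$, and gets well-definedness and surjectivity from its agreement with restriction of summation on a Weyl spanning set. The only quibble is your closing aside: the vanishing of all torus-index jets on the product already holds for Hermitian $\mathcal N$ (the product metric is block diagonal with a constant torus block), so the K\"ahler hypothesis is not what makes Lemma~\ref{L2.3} work --- it is needed only later, for Lemma~\ref{L2.4}.
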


We introduce the ring
$$
\TT_m:=\mathbb{C}[\ch_k(TM,J,g),\ch_k(E,h),d\omega,d\bar\omega,\omega,\bar\omega]\,.
$$
We may decompose $\TT_m:=\oplus_k\TT_m^k$ where $\TT_m^k\subset\KK_{m,k}^{k}$. 
We refer to \cite{ALG20} for the proof of Assertions~(1) and to \cite{ALG20a} for the proof of Assertions~2 in 
the following result; it is necessary to restrict to the K\"ahler setting.
\begin{lemma}\label{L2.4}
\ \begin{enumerate}
\item In the Riemannian setting, we have that:
\begin{enumerate}
\item $r:\JJ_{m,n}^0\rightarrow\JJ_{m-1,n}^0$ is injective if $n<m$.
\item If $m$ is even, then $\ker\{r:\JJ_{m,m}^0\rightarrow\JJ_{m-1,m}^0\}=\mathcal{E}_{m,m}\cdot\mathbb{R}$.
\end{enumerate}
\item In the K\"ahler setting, we have that:
\begin{enumerate}
\item $r:\KK_{m,n}^0\rightarrow\KK_{m-2,n}^0$ is injective if $n<m$.
\item $\ker\{r:\KK_{m,m}^0\rightarrow\KK_{m-2,m}^0\}=\star\TT_m^m$.
\end{enumerate}\end{enumerate}\end{lemma}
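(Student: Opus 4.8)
The plan is to reduce everything to H.~Weyl's description of orthogonal (resp.\ unitary) invariants~\cite{W46} combined with the geometric description of the restriction map in Equation~(\ref{E2.b}), and then to run the restriction lemma of the heat equation proof of the index theorem~\cite{ABP73}; the details are those carried out in~\cite{ALG20,ALG20a}. First I would record a weight bound. By Weyl's first theorem every $P\in\JJ_{m,n}^0$ is a linear combination of complete contractions of iterated covariant derivatives $\nabla^aR$ of the curvature tensor and $\nabla^b\omega$ of the $1$-form. Since $\nabla^aR$ contributes weight $a+2$ and $a+4$ index slots while $\nabla^b\omega$ contributes weight $b+1$ and $b+1$ index slots, a complete contraction built from $\ell$ curvature factors (and any number of $\omega$-factors, with any covariant derivatives) has exactly $n+2\ell$ index slots, hence involves at most $q:=\tfrac n2+\ell$ distinct index values; as $0\le\ell\le\tfrac n2$ this gives $\tfrac n2\le q\le n$, with $q=n$ attained only by products of $n/2$ undifferentiated curvature tensors. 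In the K\"ahler setting an analogous bound holds, with the curvatures of the Chern connections of $(TM,J,g)$ and $(E,h)$ in place of $R$, with $\nabla^a\omega,\nabla^a\bar\omega$ also allowed, and with ``index'' read as ``holomorphic or antiholomorphic index''; here one must use the K\"ahler hypothesis so that the ambient invariance theory is that of the unitary group and no torsion of the Chern connection intervenes.

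Using Lemma~\ref{L2.3} and Equation~(\ref{E2.b}), on a product $\mathcal N\times\mathbb{T}^1$ (resp.\ $\mathcal N\times\mathbb{T}^2$) with flat torus factor every jet of the metric, of the Hermitian metric, and of $\omega$ that involves one of the extra coordinate directions vanishes, so $r(P)$ is represented by the very same Weyl expression with all summations restricted to the first $m-1$ (resp.\ $m-2$) index values. This yields Assertions~(1a) and~(2a): when $n<m$ the bound above shows that no monomial of $P$ needs more than $m-1$ index values — in the complex case $q<m$ forces fewer than $\mm$ holomorphic or fewer than $\mm$ antiholomorphic indices — and a polarization argument, evaluating $P$ on metrics, Hermitian metrics, and forms whose jets at a point realize prescribed tensors supported in the first $m-1$ directions, then upgrades $r(P)=0$ to $P=0$.

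For Assertions~(1b) and~(2b) take $n=m$. One inclusion is soft. The Euler form is multiplicative and vanishes on the odd-dimensional factor of $\mathcal N\times\mathbb{T}^1$, so $r(\mathcal E_{m,m})=0$; and for an element of $\TT_m^m$ the Chern forms of the flat factor of $\mathcal N\times\mathbb{T}^2$ vanish while $E$, $\omega$, $\bar\omega$ pull back from $\mathcal N$, so the form becomes a form of real degree $2\mm$ on the $(2\mm-2)$-dimensional $\mathcal N$ and hence vanishes, giving $r(\star\TT_m^m)=0$. Thus $\mathcal E_{m,m}\cdot\mathbb R\subseteq\ker r$ and $\star\TT_m^m\subseteq\ker r$. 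For the reverse inclusions I would invoke the second fundamental theorem of invariant theory, by which the relations among Weyl contractions indexed by $1,\dots,N$ are generated by antisymmetrizations over $N+1$ indices: an element of $\ker r$ must involve a relation that is new in the smaller dimension, namely an antisymmetrization over $m$ indices (resp.\ over $\mm$ holomorphic and $\mm$ antiholomorphic indices), and by the weight bound at weight exactly $m$ this forces a fully antisymmetrized product of curvature terms. In the real case such contractions are scalar multiples of the Pfaffian, i.e.\ of $\mathcal E_{m,m}$; in the K\"ahler case a full antisymmetrization leaves no contracted slots, so the resulting $(\mm,\mm)$-form is a wedge of the curvature $2$-forms of $(TM,J,g)$ and $(E,h)$ and of $d\omega,d\bar\omega,\omega,\bar\omega$, which by Chern--Weil theory is exactly an element of $\TT_m^m$; applying $\star$ gives $\ker r=\mathcal E_{m,m}\cdot\mathbb R$ and $\ker r=\star\TT_m^m$ respectively.

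The hard part will be this last step: the first two are essentially combinatorial, but establishing that passing from dimension $m$ to $m-1$ (resp.\ complex dimension $\mm$ to $\mm-1$) enlarges the kernel by exactly a line (resp.\ by exactly $\star\TT_m^m$) requires controlling the complete set of relations among Weyl invariants at the critical weight. The K\"ahler, Dolbeault case is the more delicate one, both because one must rule out extra invariants built from the torsion of the Chern connection — which is precisely where the K\"ahler hypothesis enters — and because identifying fully antisymmetrized curvature contractions with the Chern--Weil forms spanning $\TT_m^m$ is less transparent than the Pfaffian identification in the real case.
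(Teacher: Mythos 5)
The paper itself does not prove Lemma~\ref{L2.4}: it defers to \cite{ALG20} for Assertion~(1) and to \cite{ALG20a} for Assertion~(2), so your sketch can only be measured against the strategy of those references, which it does reproduce in outline (Weyl spanning sets, the geometric restriction map of Equation~(\ref{E2.b}), and the second fundamental theorem controlling the relations that are new in the lower dimension). The real half of your argument is essentially the classical one: the count $q=\tfrac n2+\ell\le n$, with $q=n$ only for products of $n/2$ undifferentiated curvature tensors, correctly shows both that $r$ is injective for $n<m$ and that no $\omega$-dependence can survive in the critical kernel, and the identification of the fully alternated curvature contractions with the Pfaffian is standard. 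You are right, and candid, that upgrading ``must involve an alternation over $m$ indices'' to ``is a multiple of $\mathcal E_{m,m}$'' requires the normal-form/second-fundamental-theorem work that you do not carry out; that is the substance of \cite{ALG20}.

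The complex half has a genuine gap beyond the acknowledged one. Your bound gives $q\le n$, where $q$ is the common number of holomorphic and of antiholomorphic contraction slots; but for $\mm\le n<2\mm=m$ this allows $q\ge\mm$, so the assertion that ``$q<m$ forces fewer than $\mm$ holomorphic or fewer than $\mm$ antiholomorphic indices'' does not follow, and injectivity of $r:\KK_{m,n}^0\to\KK_{m-2,n}^0$ in that range is not established by the count alone (already $\tau=g^{\alpha\bar\beta}g^{\gamma\bar\delta}R_{\alpha\bar\beta\gamma\bar\delta}$ uses $q=2=\mm$ slots when $\mm=2$, yet $r(\tau)\ne0$). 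What is actually needed is that membership in $\ker r$ forces an \emph{essential} simultaneous alternation over $\mm$ holomorphic and $\mm$ antiholomorphic indices, and that the K\"ahler symmetries — symmetry of $R_{\alpha\bar\beta\gamma\bar\delta}$ and of its covariant derivatives in all holomorphic (resp.\ antiholomorphic) slots, and $\partial\omega=0$ making $\omega_{\alpha/\beta}$ symmetric — let each factor contribute at most one slot of each type to a nontrivial alternation. Balancing the holomorphic and antiholomorphic counts then forces weight at least $2\mm=m$ (factors of type $\omega$ and $\bar\omega$ must occur in equal numbers), which simultaneously delivers injectivity for $n<m$ and pins the critical kernel down to alternated products of the curvature $2$-forms, $d\omega$, $d\bar\omega$, and $\omega\wedge\bar\omega$. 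This weight-parity bookkeeping, together with the identification of the traced endomorphism-valued alternations with the Chern character polynomials spanning $\TT_m^m$, is precisely the content of \cite{ALG20a} and is where the K\"ahler hypothesis is used; none of it is present in the sketch.
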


We have the following useful result.

\begin{lemma}\label{L2.5}\rm\ 
$r(\mathfrak{a}^{\operatorname{deR}}_{m,n})=-(4\pi)^{-1/2}a_{m-1,n}^{\operatorname{deR}}$ and
$r(\mathfrak{a}^{\operatorname{Dol}}_{m,n})=-(4\pi)^{-1}a_{m-1,n}^{\operatorname{Dol}}$.
\end{lemma}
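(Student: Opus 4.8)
The plan is to exploit the product structure introduced just above: the restriction map $r$ is realized geometrically by crossing with a flat circle $\mathbb{T}^1$ (respectively a flat $2$-torus $\mathbb{T}^2$) and pulling back along a slice inclusion. So I would compute $i_\theta^*\mathfrak{a}^{\operatorname{deR}}_{m,n}(\mathcal{N}\times\mathbb{T}^1)$ directly from the definition $\mathfrak{a}^{\operatorname{deR}}_{m,n}=\sum_p(-1)^p p\cdot a_{m,n}(x,\Delta^p_{\mathcal{M}})$, using the way the de~Rham complex and the Witten-deformed Laplacian of a Riemannian product decompose.

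First I would record the tensor-product decomposition of the deformed de~Rham complex of $\mathcal{M}=\mathcal{N}\times\mathbb{T}^1$. Since the auxiliary $1$-form on the product is $\pi_1^*\omega_{\mathcal{N}}$, with no $d\theta$-component, and the metric splits, the deformed Laplacian splits as $\Delta_{\mathcal{M}}=\Delta_{\mathcal{N}}\otimes 1 + 1\otimes\Delta_{\mathbb{T}^1}$, and a $p$-form on the product is a sum of a $p$-form on $\mathcal{N}$ (the ``$0$-form in the $S^1$ direction'' piece) and a $(p-1)$-form on $\mathcal{N}$ wedged with $d\theta$ (the ``$1$-form in the $S^1$ direction'' piece). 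Correspondingly $a_{m,n}(x,\Delta^p_{\mathcal{M}}) = a_{m-1,?}(x,\Delta^p_{\mathcal{N}})\cdot a_{1,?}(\Delta^0_{\mathbb{T}^1}) + a_{m-1,?}(x,\Delta^{p-1}_{\mathcal{N}})\cdot a_{1,?}(\Delta^1_{\mathbb{T}^1})$, summed over the splitting $n = n_1+n_2$ of the weight; but the circle is flat, so $a_{1,n_2}(\Delta^q_{\mathbb{T}^1})$ vanishes unless $n_2=0$, in which case it equals $(4\pi)^{-1/2}$. Hence $i_\theta^*a_{m,n}(x,\Delta^p_{\mathcal{M}}) = (4\pi)^{-1/2}\bigl(a_{m-1,n}(x,\Delta^p_{\mathcal{N}}) + a_{m-1,n}(x,\Delta^{p-1}_{\mathcal{N}})\bigr)$.

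Next I would substitute this into $\sum_p(-1)^p p\cdot(\,\cdot\,)$ and reindex. The first sum $\sum_p(-1)^p p\, a_{m-1,n}(x,\Delta^p_{\mathcal{N}})$ is exactly $\mathfrak{a}^{\operatorname{deR}}_{m-1,n}$, but that is not what we want; the point is that it combines with the second sum. Writing $q=p-1$ in the second piece gives $\sum_q(-1)^{q+1}(q+1)a_{m-1,n}(x,\Delta^q_{\mathcal{N}}) = -\sum_q(-1)^q(q+1)a_{m-1,n}(x,\Delta^q_{\mathcal{N}})$. Adding the two contributions, the $\sum(-1)^p p\,(\cdot)$ terms cancel and one is left with $-\sum_p(-1)^p a_{m-1,n}(x,\Delta^p_{\mathcal{N}}) = -a_{m-1,n}^{\operatorname{deR}}$. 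Multiplying by the overall factor $(4\pi)^{-1/2}$ yields $r(\mathfrak{a}^{\operatorname{deR}}_{m,n}) = -(4\pi)^{-1/2}a_{m-1,n}^{\operatorname{deR}}$, as claimed. The complex case is formally identical: on a K\"ahler product $\mathcal{K}=\mathcal{K}'\times\mathbb{T}^2$ with the $(1,0)$-form pulled back from $\mathcal{K}'$, the Dolbeault complex tensors with that of the flat $\mathbb{T}^2$, whose Dolbeault heat coefficients vanish in positive weight and contribute $(4\pi)^{-1}$ in weight $0$ (the product of two circle factors, or equivalently $\dim_{\mathbb C}\mathbb{T}^2$ playing no role since $a_{2,0}$ of the flat torus is $(4\pi)^{-1}$); a $(0,p)$-form on the product is a $(0,p)$-form on $\mathcal{K}'$ plus a $(0,p-1)$-form wedged with $d\bar z$, the twisting bundle being pulled back from $\mathcal{K}'$, and the same reindexing collapse gives $r(\mathfrak{a}^{\operatorname{Dol}}_{m,n}) = -(4\pi)^{-1}a_{m-1,n}^{\operatorname{Dol}}$ — note $a_{m-1,n}^{\operatorname{Dol}}$ lives on the $(2\mm-2)$-real-dimensional manifold $\mathcal{K}'$, consistent with $r:\KK_{m,n}^0\to\KK_{m-2,n}^0$.

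The main obstacle, and the only place where real care is needed, is justifying the heat-coefficient product formula $a_{m,n}(x,\Delta_1\otimes 1 + 1\otimes\Delta_2) = \sum_{n_1+n_2=n} a_{m_1,n_1}(x_1,\Delta_1)a_{m_2,n_2}(x_2,\Delta_2)$ in the present setting, i.e.\ that it holds for the Witten-deformed Laplacians on the relevant form bundles (not just scalar Laplacians) and that the decomposition of the form bundle on the product into the external tensor product of form bundles is compatible with the deformed differential — this is where one uses that $\omega$ on the product has no component in the flat direction and that the flat torus contributes $\exp(-t\Delta_{\mathbb{T}})$ with constant local trace density $(4\pi)^{-\dim/2}$ (all higher coefficients vanishing because the torus is flat and $\omega$-free). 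Once that product structure is in hand, the combinatorial cancellation of the $\sum(-1)^p p$ terms against the shifted $\sum(-1)^p(p+1)$ terms is immediate, and the identification of the leftover sum with $a_{m-1,n}^{\operatorname{deR}}$ (resp.\ $a_{m-1,n}^{\operatorname{Dol}}$) is just the definition. I would also remark that $r$ is well defined on these invariants by Lemma~\ref{L2.3}, so it suffices to verify the identity on product manifolds of the above form.
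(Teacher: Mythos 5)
Your proposal is correct and follows essentially the same route as the paper: realize $r$ via the product $\mathcal{N}\times\mathbb{T}^1$ (resp.\ $\mathbb{T}^2$), use the splitting $\Lambda^pM=\Lambda^pN\oplus\Lambda^{p-1}N\wedge d\theta$ together with the flatness of the torus factor to get $a_{m,n}(x,\Delta^p_{\mathcal M})=(4\pi)^{-1/2}\{a_{m-1,n}(x,\Delta^p_{\mathcal N})+a_{m-1,n}(x,\Delta^{p-1}_{\mathcal N})\}$, and then observe the cancellation $\sum_p(-1)^p\{p-(p+1)\}a_{m-1,n}(x,\Delta^p_{\mathcal N})=-a^{\operatorname{deR}}_{m-1,n}$. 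The only difference is that you flag the product formula for the heat coefficients as the point needing justification, which the paper takes as standard.
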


\begin{proof} Although the lemma in the real setting follows from work of G\"unther and Schimming~\cite{GS77}, we
shall give a direct
proof in the interests of completeness. Let $\mathcal{M}_m=\mathcal{N}_{m-1}\times \mathbb{T}^1$.
We compute:
\medbreak\quad $\Lambda^pM=\Lambda^pN\oplus\Lambda^{p-1}N\wedge d\theta$,
\medbreak\quad $\Delta^p_{\mathcal{M}}=\{\Delta^p_{\mathcal{N}}\otimes\id+\id\otimes\Delta^0_{\mathbb{T}^1}\}
\oplus\{\Delta^{p-1}_{\mathcal{N}}\otimes\id+\id\otimes\Delta^1_{\mathbb{T}^1}\}$,
\medbreak\quad $a_{m,n}((x,\theta),\Delta^p_{\mathcal{M}})=(4\pi)^{-1/2}\{a_{m-1,n}(x,\Delta^p_{\mathcal{N}})
+a_{m-1,n}(x,\Delta^{p-1}_{\mathcal{N}})\}$,
\medbreak\quad $\displaystyle\sum_p(-1)^pp\cdot a_{m,n}((x,\theta),\Delta^p_{\mathcal{M}})$
\medbreak\qquad$=\displaystyle
(4\pi)^{-1/2}\sum_p(-1)^pp\cdot\left\{a_{m-1,n}(x,\Delta^p_{\mathcal{N}})
+a_{m-1,n}(x,\Delta^{p-1}_{\mathcal{N}})\right\}$
\medbreak\qquad$=\displaystyle
(4\pi)^{-1/2}\sum_p(-1)^pa_{m-1,n}(x,\Delta^p_{\mathcal{N}})\{p-(p+1)\}$.
\medbreak\noindent Assertion~1 now follows from Equation~(\ref{E2.b}); the argument is the same for the Dolbeault complex.
\end{proof}

\subsection{Low dimensional computations} We refer to \cite{ALG20} for the following result:
\begin{lemma}\label{L2.6}
$\displaystyle a_{1,2}^{\operatorname{deR}}=-\frac{\delta\omega}{\sqrt\pi}$ and
$\displaystyle a_{2,2}^{\operatorname{Dol}}=\frac{\tau}{8\pi}-\frac{1}{\pi}\delta(\Re(\omega))$.
\end{lemma}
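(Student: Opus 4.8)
\emph{The de~Rham density.} The plan is to compute $a^{\operatorname{deR}}_{1,2}$ directly from the definition, using the universal formula $a_{m,2}(x,D)=(4\pi)^{-m/2}(\tfrac16\tau+E)$ for the second heat trace coefficient of a Laplace type operator $D=-(g^{ij}\nabla_i\nabla_j+E)$. On a $1$-dimensional manifold $\Lambda^\bullet M=\Lambda^0M\oplus\Lambda^1M$, so only $\Delta^0_{\mathcal M}=\delta_\omega d_\omega$ and $\Delta^1_{\mathcal M}=d_\omega\delta_\omega$ occur. Writing $\omega=f\,d\theta$ in an arclength coordinate $\theta$, so that $\|\omega\|^2=f^2$ and $\delta\omega=-f'$, one has $d_\omega u=(u'+fu)\,d\theta$ and $\delta_\omega(v\,d\theta)=-v'+fv$, and a one line computation gives
\[
\Delta^0_{\mathcal M}=-\partial_\theta^2+\|\omega\|^2+\delta\omega,\qquad
\Delta^1_{\mathcal M}=-\partial_\theta^2+\|\omega\|^2-\delta\omega\,.
\]
Both operators are Laplace type with vanishing curvature and endomorphism $E=-\|\omega\|^2\mp\delta\omega$, so $a_{1,2}(x,\Delta^p_{\mathcal M})=-(4\pi)^{-1/2}(\|\omega\|^2\pm\delta\omega)$. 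In the alternating sum $\sum_p(-1)^p$ the $\|\omega\|^2$ terms cancel, leaving $a^{\operatorname{deR}}_{1,2}=-(4\pi)^{-1/2}\cdot2\delta\omega=-\delta\omega/\sqrt\pi$. Alternatively one may note that $\JJ^0_{1,2}=\mathbb{R}\,\|\omega\|^2\oplus\mathbb{R}\,\delta\omega$, that the Bott cancellation~\eqref{E1.a} forces the coefficient of $\|\omega\|^2$ to vanish since $\int_M\delta\omega\,\dvol=0$ on a closed $1$-manifold, and that one evaluation then pins down the remaining constant.

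\emph{The Dolbeault density.} For this I would specialize Theorem~\ref{T1.1}: since $2=m$, it applies with $\mm=1$ and gives $a^{\Dol}_{2,2}=g\bigl(\Omega,\{\Td(T_cM)\wedge\ch(E)\wedge\Theta\}_2\bigr)$. Taking $E$ trivial of rank one (the case of the stated formula) and expanding in weight~$2$ with $\Td_1=\tfrac12c_1$ and $\Theta=1+\tfrac1\pi d\Im(\omega)+\cdots$, the weight~$2$ part is $\tfrac12c_1(T_cM)+\tfrac1\pi d\Im(\omega)$. On a Riemann surface $c_1(T_cM)=\tfrac1{2\pi}\rho=\tfrac{\tau}{4\pi}\Omega$ and $g(\Omega,\Omega)=1$, so the first summand contributes $\tau/(8\pi)$. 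For the second, $g(\Omega,d\Im(\omega))=\star\,d\Im(\omega)$, and since in real dimension~$2$ the Hodge star sends a $(1,0)$ form to $-i$ times itself, $\star\Re(\omega)=\Im(\omega)$, whence $\delta\Re(\omega)=-\star d\star\Re(\omega)=-\star\,d\Im(\omega)$; the second summand therefore contributes $-\tfrac1\pi\delta\Re(\omega)$. Adding the two gives $a^{\Dol}_{2,2}=\tfrac{\tau}{8\pi}-\tfrac1\pi\delta\Re(\omega)$. For a general Hermitian bundle one simply adds $g(\Omega,\ch_1(E))$ and scales the other two terms by $\operatorname{rk}(E)$.

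\emph{The main obstacle.} The de~Rham part is essentially bookkeeping once the sign conventions for $\delta$ and $\operatorname{int}(\cdot)$ are fixed. The step I expect to be most delicate is the Dolbeault identity: one must correctly match the normalizations built into Theorem~\ref{T1.1} — the precise form of $\Td_1$, the passage $c_1(T_cM)\leftrightarrow\tau$, and the expansion of $\Theta$ — and, above all, get the sign in $\star\Re(\omega)=\Im(\omega)$ right, which is exactly where the orientation convention of the Dolbeault complex enters. A self-contained alternative, not invoking Theorem~\ref{T1.1}, is to compute $\Delta^{0,0}_{\mathcal K}$ and $\Delta^{0,1}_{\mathcal K}$ on a Riemann surface, read off their Bochner endomorphisms, and take the super-trace of the $a_2$ formula; then the leading Laplacian and the undifferentiated $\|\omega\|^2$ and $E$-curvature terms cancel in the super-trace, the Ricci term yields $\tau/(8\pi)$, and the $\partial\bar\omega$, $\bar\partial\omega$ cross terms assemble into $-\tfrac1\pi\delta\Re(\omega)$; the obstacle there is deriving those endomorphisms and checking the cancellations.
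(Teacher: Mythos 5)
Your proposal is correct. Note that the paper gives no proof of Lemma~\ref{L2.6} at all: it is quoted from the authors' earlier papers \cite{ALG20,ALG20a}, so there is no in-paper argument to compare against. Your de~Rham computation is the natural self-contained one: with $\omega=f\,d\theta$ one indeed gets $\Delta^0_{\mathcal M}=-\partial_\theta^2+\|\omega\|^2+\delta\omega$ and $\Delta^1_{\mathcal M}=-\partial_\theta^2+\|\omega\|^2-\delta\omega$, and the universal formula $a_{1,2}=(4\pi)^{-1/2}(\tfrac16\tau+E)$ gives exactly $-\delta\omega/\sqrt\pi$ after the $\|\omega\|^2$ terms cancel in the supertrace; your alternative argument via $\JJ^0_{1,2}=\mathbb{R}\|\omega\|^2\oplus\mathbb{R}\,\delta\omega$ and the Bott cancellation (\ref{E1.a}) is also valid. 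For the Dolbeault identity you derive the formula from Theorem~\ref{T1.1} rather than from scratch; since Theorem~\ref{T1.1} is itself imported from \cite{ALG20a}, this is a consistency check relative to already-quoted results rather than an independent proof, but that is entirely appropriate here, and the bookkeeping is right: in real dimension two every Hermitian metric is K\"ahler, the degree-two part of $\Td\wedge\ch\wedge\Theta$ for trivial $E$ (which is the implicit hypothesis of the stated formula) is $\tfrac12c_1(T_cM)+\tfrac1\pi d\Im(\omega)$, $c_1(T_cM)=\tfrac{\tau}{4\pi}\Omega$ gives the $\tau/(8\pi)$ term, and with the orientation fixed by $\Omega$ one has $\star\,dz=-i\,dz$, hence $\star\Re(\omega)=\Im(\omega)$ and $\delta\Re(\omega)=-\star d\Im(\omega)$, producing the $-\tfrac1\pi\delta\Re(\omega)$ term with the correct sign. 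The only caveat worth recording is the one you already flag: the entire Dolbeault sign hinges on the orientation convention $\dvol=\Omega$, and your direct-Bochner alternative would require tracking the factor of $2$ in $\Delta_{\mathcal K}=2(\bar\partial_{\bar\omega}\delta''_\omega+\delta''_\omega\bar\partial_{\bar\omega})$.
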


\section{The proof of Theorem~\ref{T1.3}~(1a,2a)}
Let $n<m-1$. By Lemma~\ref{L2.5}, 
$r(\mathfrak{a}^{\operatorname{deR}}_{m,n})=a_{m-1,n}^{\operatorname{deR}}$. 
By Theorem~\ref{T1.1}, $a_{m-1,n}^{\operatorname{deR}}=0$. 
By Lemma~\ref{L2.4}, $r:\JJ_{m,n}\rightarrow\JJ_{m-1,n}$
is injective for $n<m$. This shows that $\mathfrak{a}^{\operatorname{deR}}_{m,n}=0$ which
establishes Theorem~\ref{T1.3}~(1a);
the proof of Theorem~\ref{T1.3}~(2a) is the same in the K\"ahler setting.~\qed

\section{The proof of Theorem~\ref{T1.3}~(1b,2b)}
By Theorem~\ref{T1.1}, $a_{2k,2k}^{\operatorname{deR}}=\mathcal{E}_{2k,2k}$. 
By Lemma~\ref{L2.5}, $r(\mathfrak{a}^{\operatorname{deR}}_{2k+1,2k})=-(4\pi)^{-1/2}a^{\operatorname{deR}}_{2k,2k}$.
By Lemma~\ref{L2.4}, $r:\JJ_{2k+1,2k}\rightarrow\JJ_{2k,2k}$ is injective.
By construction, we have that $r\mathcal{E}_{2k+1,2k}=\mathcal{E}_{2k,2k}$.
Assertion~(1b) follows.
The same argument in the K\"ahler setting shows
$$\mathfrak{a}^{\operatorname{Dol}}_{m,m-2}=\frac1{(\mm-1)!}
g(\operatorname{Td}(M,g,J)\wedge\operatorname{ch}(E,h)\wedge\Theta,\Omega^{\mm-1})
$$
which establishes the first part of Assertion~2b.
It is immediate from the definition that $\Theta=1+d\Phi$. Since $\operatorname{Td}$ and $\operatorname{ch}$ are closed,
\begin{eqnarray*}
&&\{\operatorname{Td}(M,g,J)\wedge\operatorname{ch}(E,h)\wedge\Theta\}_{m-2}\\
&=&\{\operatorname{Td}(M,g,J)\wedge\operatorname{ch}(E,h)\}_{m-2}
+d\{\operatorname{Td}(M,g,J)\wedge\operatorname{ch}(E,h)\wedge\Phi\}_{m-3}\,.
\end{eqnarray*}
Taking the inner product with $\frac1{(\mm-1)!}\Omega^{\mm-1}$ is just the Hodge~$\star$ operator in complex dimension $\mm-1$. Thus
\begin{equation}\label{E4.a}\begin{array}{l}
\frac1{(\mm-1)!}g(\operatorname{Td}(M,g,J)\wedge\operatorname{ch}(E,h)\wedge\Theta,\Omega^{\mm-1})\\[0.05in]
=\frac1{(\mm-1)!}g(\operatorname{Td}(M,g,J)\wedge\operatorname{ch}(E,h),\Omega^{\mm-1})+\delta Q_{m-2,m-3}^1
\end{array}\end{equation}
for $Q_{m-2,m-3}^1=\star \{\operatorname{Td}(M,g,J)\wedge\operatorname{ch}(E,h)\wedge\Phi\}_{m-3}$. We have $\delta r=r\delta$. 
By Lemma~\ref{L2.3} and Lemma~\ref{L2.4}, we can lift Equation~(\ref{E4.a}) to a corresponding equation in 
complex dimension $\mm$ and complete the proof of Assertion~(2b).~\qed

\begin{remark}\rm This gives an explicit description of the 1-form $Q_{m,m-3}^1$ of Theorem~\ref{T1.3}~(2b)
as the unique element $Q_{m,m-3}^1\in\KK_{m,m-3}^1$ such that
\smallbreak\centerline{$r(Q_{m,m-3}^1)=\star\{\operatorname{Td}\wedge\operatorname{ch}\wedge\Phi\}_{m-3}\in\KK_{m-2,m-3}^1$.}
\end{remark}

\section{The proof of Theorem~\ref{T1.3}~(1c)}\label{S5}
\medbreak We continue our discussion by generalizing a result of Gilkey~\cite{G95} 
(see Assertion~4 of Lemma 2.9.1 on page 210) from the context of purely
metric invariants to invariants which also depend upon the derivatives of $\omega$.

\begin{lemma}\label{L5.1}\rm
Let $P\in\JJ_{m,n}$ where $n\ne m$. Suppose $\int_MP(x,g,\omega)\dvol(g)$
is independent of $(g,\omega)$ for all closed $m$-dimensional manifolds $M$. 
Then there exists $Q_{m,n-1}\in\JJ_{m,n-1}^{1}$ so that  $P=\delta Q_{m,n-1}$.
\end{lemma}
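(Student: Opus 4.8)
The plan is to reduce to the purely metric case treated in Gilkey~\cite{G95} by a separation-of-variables argument combined with the restriction map machinery of Section~\ref{S2}. First I would observe that the hypothesis is stable under the operation $\mathcal M\mapsto\mathcal M\times\mathbb T^1$: if $\int_MP\,\dvol$ is a homotopy-type (indeed, a constant) invariant on all closed $m$-manifolds, then the same holds for the restriction $r(P)\in\JJ_{m-1,n}$ on all closed $(m-1)$-manifolds, because $\int_{N\times S^1}P(\mathcal N\times\mathbb T^1)\,\dvol=2\pi\int_N i_\theta^*P(\mathcal N\times\mathbb T^1)\,\dvol=2\pi\int_N r(P)(\mathcal N)\,\dvol$ by Fubini and Equation~(\ref{E2.b}). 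So the "integral is independent of the structures" property descends along $r$.

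Next I would set up a downward induction on the dimension. For $n<m$ one can iterate the restriction map until the dimension drops to $n$ or below; more precisely, since $r:\JJ_{m,n}\to\JJ_{m-1,n}$ is injective for $n<m$ by Lemma~\ref{L2.4}~(1a), and since $\delta$ commutes with $r$ (i.e. $\delta r=r\delta$, as used in Section~4), it suffices to produce the primitive $Q$ after restricting far enough that the ambient dimension equals $n+1$ — there the result is the classical statement, and one then lifts the identity $r(P)=\delta\,r(Q)$ back up one dimension at a time using surjectivity of $r$ on $1$-forms (Lemma~\ref{L2.3}) together with the injectivity of $r$ in the relevant weight. For $n>m$ the situation is even easier: by Lemma~\ref{L2.4}~(1a) applied in the reverse direction, or directly by stabilization, every invariant of weight $n>m$ on an $m$-manifold is already determined by, and indeed lies in the image of, lower-dimensional data, and the Euler-form obstruction of part (1b) of that lemma never appears because $n\ne m$; so the same descent applies.

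The genuinely new input beyond \cite{G95} is that $P$ now depends on the jets of $\omega$ and not merely on the metric, and the key step — the main obstacle — is to run Gilkey's argument in this enlarged category. His proof proceeds by writing $P$ in a Weyl-invariant spanning set, using the homogeneity relation of Lemma~\ref{L2.1} together with a variational argument (differentiating $\int_MP(x,c^2g,\omega)\,\dvol$ in the conformal parameter, or more generally along a curve of structures) to show that the part of $P$ not of divergence form must vanish; the cohomological obstruction to writing an invariant as a total divergence is precisely an Euler-type class, which is excluded once $n\ne m$. In our setting one must check that the extra variables $\omega_{i/U}$ contribute nothing new to this obstruction: the point is that $\omega$ enters Weyl-invariantly only through $d\omega$ and its covariant derivatives together with $\omega$ itself contracted against curvature, none of which supports a nonzero cohomological obstruction in weight $n\ne m$. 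I would verify this by the same stabilization trick — any putative obstruction would survive restriction to arbitrarily high codimension in $\mathcal N\times(\mathbb T^1)^k$, where the $\omega$-dependence becomes "horizontal" and the metric-only result of \cite{G95} forces it to be a divergence. Assembling these pieces yields $Q_{m,n-1}\in\JJ_{m,n-1}^1$ with $P=\delta Q_{m,n-1}$, as claimed.
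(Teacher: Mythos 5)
Your restriction-map scaffolding does not reduce the problem to anything that is actually known, and the one step you defer --- ``run Gilkey's argument in the enlarged category'' --- is the entire content of the lemma. The result you cite from \cite{G95} is for purely metric invariants, so after descending to ambient dimension $n+1$ you are still facing an invariant of the pair $(g,\omega)$ and must prove the lemma there from scratch; the descent buys nothing. Your fallback for the case $n>m$ is moreover false: Lemma~\ref{L2.4}~(1a) gives injectivity of $r$ only for $n<m$, and for $n\ge m$ the restriction map has a nontrivial kernel (any invariant built from an alternation over $m$ indices dies under $r$), so ``the same descent applies'' does not hold, and the claim that every invariant of weight $n>m$ ``lies in the image of lower-dimensional data'' is unsupported. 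You have also misidentified the mechanism by which $n\ne m$ enters: it is not that ``the cohomological obstruction to being a divergence is an Euler-type class''; the Euler form plays no role in this lemma at all (it appears only later, as $\ker r$ in weight $n=m$, in the proof of Theorem~\ref{T1.3}~(1c)).

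The paper's proof is direct and uniform in $n\ne m$, with no restriction maps. One forms the conformal variation $P_1(f,g,\omega)\dvol(g):=\partial_\varepsilon\{P(x,e^{2\varepsilon f}g,\omega)\dvol(e^{2\varepsilon f}g)\}|_{\varepsilon=0}$, which is linear in the jets of $f$ (the $\omega$-variables are simply not varied, which is exactly why the extension beyond \cite{G95} is painless); writes $P_1=\delta Q+\sum_k\sum_{i_1,\dots,i_k}(-1)^kfQ^{i_1\dots i_k}{}_{;i_k\dots i_1}$ by formal integration by parts; uses the hypothesis that $\int_MP\dvol$ is constant to kill the Euler--Lagrange term for every $f$, hence pointwise; and then sets $f=1$ and invokes the homogeneity of Lemma~\ref{L2.1} to get $P_1(1,g,\omega)=(m-n)P$. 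Since $m-n\ne0$, one divides to obtain $P=\frac1{m-n}\delta Q(1,g,\omega)$. This conformal-variation computation is the missing core of your proposal; the surrounding restriction-map apparatus can be discarded.
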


\begin{proof}
 Let $f\in C^\infty(M)$ and $\varepsilon\in\mathbb{R}$. We consider the conformal variation
$$
P_1(f,g,\omega)\dvol(g):=
\partial_{\varepsilon}\left\{\left.P(x,e^{2\varepsilon f}g,\omega)\dvol(e^{2\varepsilon f}g)\right\}\right|_{\varepsilon=0}\,.$$
Since $P_1$ is linear in the jets of $f$, we have that
$$
P_1(f,g,\omega)=\sum_k\sum_{i_1,\dots,i_k}f_{;i_1\dots i_k}Q^{i_1\dots i_k}(g,\omega)\,.
$$
We integrate by parts formally to express
$$
P_1=\delta Q+\sum_k\sum_{i_1,\dots,i_k}(-1)^kfQ^{i_1\dots i_k}{}_{;i_k\dots i_1}\,.
$$
By assumption,
\begin{eqnarray*}
0&=&\partial_\varepsilon\left.\left\{\int_MP(x,e^{2\varepsilon f}g,\omega)\dvol
(e^{2\varepsilon f}g)\right\}\right|_{\varepsilon=0}=\int_MP_1(f,g,\omega)\dvol(g)\\
&=&\int_Mf\sum_k\sum_{i_1,\dots,i_k}(-1)^kQ^{i_1\dots i_k}{}_{;i_k\dots i_1}\dvol(g)\,.
\end{eqnarray*}
Since $f$ was arbitrary, 
$$\displaystyle\sum_k\sum_{i_1,\dots,i_k}(-1)^kQ^{i_1\dots i_k}{}_{;i_k\dots i_1}=0\,.$$
Thus $P_1(f,g,\omega)=\delta Q(f,g,\omega)$. We set $f=1$ to see 
\begin{equation}\label{E5.a}
P_1(1,g,\omega)=\delta Q(1,g,\omega)\,.
\end{equation}
We have that $\dvol(e^{2\varepsilon}g)=e^{m\varepsilon}\dvol(g)$.
Since $P$ is homogeneous of weight $n$, Lemma~\ref{L2.1} yields
$P(x,e^{2\varepsilon}g,\omega)\dvol(e^{2\varepsilon}g)=e^{(m-n)\varepsilon} P(x,g,\omega)\dvol(g)$.
Differentiating this identity shows $P_1=(m-n)P$; the Lemma now follows from Equation~(\ref{E5.a}).
\end{proof}

We now establish Assertion~(1c) of Theorem~\ref{T1.3}.
Let $P=r(\mathfrak{a}^{\operatorname{deR}}_{m,m})\in\JJ_{m-1,m}^0$.
By Lemma~\ref{L2.5}, $P$ is a multiple of $a_{m-1,m}^{\operatorname{deR}}$.
The hypothesis of Lemma~\ref{L5.1} are satisfied by Equation~(\ref{E1.a}). Consequently, Lemma~\ref{L5.1} shows that
$r(\mathfrak{a}^{\operatorname{deR}}_{m,m})=\delta Q_{m-1,m-1}^1$. By Lemma~\ref{L2.4}, we may choose $Q_{m,m-1}^1$ so
$r(Q_{m,m-1}^1)=Q_{m-1,m-1}^1$. Since $\delta r=r\delta$, $r(\mathfrak{a}^{\operatorname{deR}}_{m,m}-\delta Q_{m,m-1}^1)=0$. 
We use Lemma~\ref{L2.4} to see that
$$
\mathfrak{a}^{\operatorname{deR}}_{m,m}-\delta Q_{m,m-1}=c(m)\mathcal{E}_{m,m}\,.
$$
To evaluate the coefficient $c(m)$, we may take $\omega=0$ and use
(local) Poincare duality to see that $a_{m,m}(\Delta^p_{\mathcal{M}})=a_{m,m}(\Delta^{m-p}_{\mathcal{M}})$.
Consequently, we may show that $c(m)=\frac12m$ by computing:
\begin{eqnarray*}
&&2\mathfrak{a}^{\operatorname{deR}}_{m,m}=\sum_p(-1)^p p\cdot\left\{a_{m,m}(x,\Delta^p_{\mathcal{M}})+a_{m,m}(x,\Delta^{m-p}_{\mathcal{M}})\right\}\\
&=&\sum_p(-1)^pa_{m,m}(x,\Delta^p_{\mathcal{M}})\{p+m-p\}=m\cdot a_{m,m}^{\operatorname{deR}}\,.
\end{eqnarray*}

Suppose $m=2$. We use Lemma~\ref{L2.6} to see 
$-(4\pi)^{-1/2}a_{1,2}^{\operatorname{deR}}=(2\pi)^{-1}\delta\omega$.
Consequently, $r(Q_{2,1}^1)$ is non-zero.~\qed

\section{The proof of Theorem~\ref{T1.3}~(2c)}
There are several fundamental differences between the real and the complex
settings and we must treat the variables
$\{\omega_\alpha,\bar\omega_{\bar\beta}\}$ differently from the other variables.
In Section~\ref{S6.1}, we introduce some additional notation.
 Section~\ref{S6.2} is devoted to showing that in fact the heat trace invariants
$a_{m,n}(\Delta^{p,q})$ do not involve the $\{\omega_\alpha,\bar\omega_{\bar\beta}\}$
 variables but only the derivatives of positive order in $\omega$
are present; this uses a gauge renormalization.
 This is in marked contrast to the real setting. In Section~\ref{S6.3}, we complete the analysis;
again, there is a subtlety in that the conformal variation of a K\"ahler metric need no longer be K\"ahler.
Consequently, we pass (temporarily) to the Hermitian setting.
\subsection{Spaces of invariants}\label{S6.1}
We adopt the notation of Section~\ref{S2.2}. 
Recall that $\mathcal{U}\subset M_\mm(\mathbb{C})\otimes M_\ell(\mathbb{C})$ is the open subset consisting of all 
the matrices $(g,h)$
so that $g$ and $h$ define positive definite Hermitian inner products.
We
adjoin variables sucessively to the ground ring $C^\infty(\mathcal{U})$ to define the sub-rings:
\begin{eqnarray*}
&&\UU_m:=C^\infty(\mathcal{U})[g_{\alpha\bar\beta/U\bar V},h_{p\bar q/U\bar V}]_{|U|+|V|>0},\\
&&\VV_m:=\UU_m[\omega_{\alpha/UV},\bar\omega_{\bar\beta/UV}]_{|U|+|V|>0}\,.
\end{eqnarray*}
We explicitly exclude the variables $\{\omega_\alpha,\bar\omega_{\bar\beta}\}$ from $\UU_m$ and $\VV_m$.
Let $\UU_{m,n}\subset\UU_m$ and $\VV_{m,n}\subset\VV_m$ be the subspaces of weight $n$.
We impose no assumption of invariance as we are simply interested in the nature of the polynomials for the moment.
Let
$\End(\Lambda^{p,q}(M)\otimes E;\UU)$ and $\End(\Lambda^{p,q}(M)\otimes E;\VV)$ 
be the vector space of endomorphisms of
$\Lambda^{p,q}(M)\otimes E$ with coefficients in the rings $\UU$ and $\VV$, respectively.

\subsection{Normalizing the gauge}\label{S6.2}
The variables $\{\omega_\alpha,\bar\omega_{\bar\beta}\}$ are troublesome and we eliminate them from
consideration as follows. We work in the Hermitian setting as we have imposed no conditions on $\Omega$.

\begin{lemma}\label{L6.1}\
$a_{m,n}(x,\Delta^{p,q})\in\VV_{m,n}$, i.e., $a_{m,n}(x,\Delta^{p,q})$
does not involve the variables $\{\omega_\alpha,\bar\omega_{\bar\beta}\}$.
\end{lemma}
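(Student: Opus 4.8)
The plan is to strip $\omega$ out of the operator by a local, \emph{non-unitary} gauge transformation, at the cost of a conformal change of the fibre metric $h$, and then to invoke the standard invariance theory for the \emph{unperturbed} Dolbeault Laplacian. Since $\omega$ is $\partial$ closed of type $(1,0)$, its conjugate $\bar\omega$ is $\bar\partial$ closed of type $(0,1)$, so the Dolbeault--Grothendieck lemma gives, on a polydisc about any point $P$, a smooth complex valued function $\phi$ with $\bar\partial\phi=\bar\omega$. On $\Lambda^{p,q}(M)\otimes E$ we then have exactly $\bar\partial_{\bar\omega}=\bar\partial+\operatorname{ext}(\bar\omega)=e^{-\phi}\circ\bar\partial\circ e^{\phi}$; since multiplication by $e^{\phi}$ has formal adjoint multiplication by $e^{\bar\phi}$, the adjoint is $\delta^{\prime\prime}_\omega=e^{\bar\phi}\circ\delta^{\prime\prime}\circ e^{-\bar\phi}$, where $\delta^{\prime\prime}=\bar\partial^{\dagger}$ is the unperturbed codifferential. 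Conjugating $\Delta^{p,q}_{\mathcal{K}}=2(\bar\partial_{\bar\omega}\delta^{\prime\prime}_\omega+\delta^{\prime\prime}_\omega\bar\partial_{\bar\omega})$ by the bundle automorphism $B:=e^{\phi}$ and setting $\mu:=\phi+\bar\phi=2\operatorname{Re}\phi$ (a real function), the two factors of $e^{\phi}$ on the $\bar\partial$ side cancel while on the $\delta^{\prime\prime}$ side they combine to $e^{\mu}$, giving
$$
B\,\Delta^{p,q}_{\mathcal{K}}\,B^{-1}=2\bigl(\bar\partial\,\delta^{\prime\prime}_{h''}+\delta^{\prime\prime}_{h''}\,\bar\partial\bigr),\qquad
\delta^{\prime\prime}_{h''}:=e^{\mu}\,\delta^{\prime\prime}\,e^{-\mu},\quad h'':=e^{-\mu}h .
$$
Here $\delta^{\prime\prime}_{h''}$ is exactly the formal adjoint of $\bar\partial$ after $h$ is replaced by $h''$ (the metric on the form bundle being untouched), so the right hand side is the \emph{unperturbed} Dolbeault Laplacian of $(E,h'')$. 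This is the announced gauge renormalization, and the crucial feature — which has no counterpart in the real setting — is that $e^{\phi}$ and its adjoint $e^{\bar\phi}$ recombine into a \emph{real} reparametrization $e^{\mu}$ of the fibre metric rather than into a fresh Witten deformation.

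Next I would use that the local heat invariants are conjugation invariant: for a bundle automorphism $B$ one has $e^{-tB\Delta B^{-1}}(x,x)=B(x)\,e^{-t\Delta}(x,x)\,B(x)^{-1}$ on the diagonal, so the pointwise fibre traces coincide and $a_{m,n}(x,\Delta^{p,q}_{\mathcal{K}})=a_{m,n}(x,\Delta^{p,q}_{(E,h''),\,0})$; only the germ of $B$ at $x$ is used, so the fact that $\phi$ is merely local does not matter. By the invariance theory for operators of Laplace type (Gilkey~\cite{G95}), $a_{m,n}(x,\Delta^{p,q}_{(E,h'),\,0})$ is a universal polynomial, with coefficients smooth in $(g,h')$, in the jets at $x$ of the Riemann curvature of $g$, of the torsion of the Hermitian structure $(g,J)$, of the Chern curvature of $\Lambda^{p,0}(M)$, and of the Chern curvature $F(E,h')$ of $(E,h')$; in particular the fibre metric enters only through $F(E,h')$ and its covariant derivatives. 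So it remains to re-express $F(E,h'')$ in terms of the original data. Since $h''=e^{-\mu}h$ one gets $F(E,h'')=F(E,h)+(\partial\bar\partial\mu)\,\id_E$, and from $\bar\partial\phi=\bar\omega$ (hence $\partial\bar\phi=\omega$ by conjugation) and $\partial\bar\partial=-\bar\partial\partial$ one computes $\partial\bar\partial\mu=\partial\bar\omega-\bar\partial\omega$, which involves only first derivatives of $\omega$ (the undetermined $(1,0)$ part $\partial\phi$ of $\phi$ drops out). The correction $(\partial\bar\omega-\bar\partial\omega)\,\id_E$ is central, hence disappears from every commutator and so from every covariant derivative of $F(E,\cdot)$, and covariant differentiation of it produces only higher derivatives of $\omega$. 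Feeding this back into the universal polynomial shows that $a_{m,n}(x,\Delta^{p,q}_{\mathcal{K}})$, written in terms of $(g,J,E,h,\omega)$, contains $\omega$ only through derivatives of positive order; that is, $a_{m,n}(x,\Delta^{p,q})\in\VV_{m,n}$.

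I expect the real work to be in the last step: making rigorous that the fibre metric enters the unperturbed Dolbeault heat invariant only through the Chern curvature and its covariant derivatives, and that the central correction above really evaporates at every stage, so that no undifferentiated $\omega_\alpha$ or $\bar\omega_{\bar\beta}$ can survive. The innocuous-looking identity $\delta^{\prime\prime}_\omega=e^{\bar\phi}\circ\delta^{\prime\prime}\circ e^{-\bar\phi}$ (with $\bar\phi$, not $\phi$) is the heart of the argument: it is precisely what turns the Witten deformation into a harmless conformal change of $h$, and it is what is responsible for the contrast with the real case alluded to at the start of this section.
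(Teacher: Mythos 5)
Your proof is correct in substance but follows a genuinely different route from the paper's. The paper never integrates $\omega$: it writes $\Delta^{p,q}$ out in a local frame and conjugates by $e^{\xi}$ with $\xi:=\omega_\alpha(P)z^\alpha-\bar\omega_{\bar\beta}(P)\bar z^{\bar\beta}$ --- a \emph{linear, purely imaginary} function chosen separately at each point $P$ --- so that every occurrence of $\omega_\alpha,\bar\omega_{\bar\beta}$ in the total symbol is replaced by $\omega_\alpha-\omega_\alpha(P)$, $\bar\omega_{\bar\beta}-\bar\omega_{\bar\beta}(P)$; these vanish at $P$, so the jets of the symbol at $P$ involve only derivatives of $\omega$, and gauge invariance of $a_{m,n}$ finishes the argument at the symbol level, with no structure theorem and no use of $\partial\omega=0$. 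You instead solve $\bar\partial\phi=\bar\omega$ (which does require $\partial\omega=0$; this is a standing hypothesis, so legitimate, but strictly less general than the paper's proof), converting the Witten deformation into a conformal rescaling $h\mapsto e^{-\mu}h$ via a non-unitary gauge transformation. What your route buys is more: it shows the $\omega$-dependence of $a_{m,n}$ factors entirely through $F(E,h)+(\partial\bar\omega-\bar\partial\omega)\id_E$ and its covariant derivatives, which is precisely the mechanism behind the factor $\Theta$ in Theorem~\ref{T1.1} (note $d\Im(\omega)$ is a multiple of $\bar\partial\omega-\partial\bar\omega$ when $\partial\omega=0$). What it costs is the step you yourself flag: you must know that the \emph{unperturbed} Dolbeault heat invariants in the Hermitian setting depend on $h$ only through the Chern curvature and its covariant derivatives. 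That is true --- it follows from the canonical $(\nabla,\mathcal{E})$ decomposition of Laplace-type operators in \cite{G95} together with the fact that $a_{m,n}$ depends on $\nabla$ only through $\Omega^{\nabla}$ --- but it is an input of roughly the same weight as the invariance theory the lemma is meant to feed into, whereas the paper's pointwise unitary trick is self-contained. Your individual identities ($\delta^{\prime\prime}_\omega=e^{\bar\phi}\circ\delta^{\prime\prime}\circ e^{-\bar\phi}$, the recombination into the real factor $e^{\mu}$, $F(E,e^{-\mu}h)=F(E,h)+\partial\bar\partial\mu\,\id_E$, $\partial\bar\partial\mu=\partial\bar\omega-\bar\partial\omega$, and the centrality of the correction) all check out, as does the conjugation invariance of the diagonal fibre trace of the heat kernel under a non-unitary bundle automorphism.
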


\begin{proof} We may decompose
$$
\bar\partial_{\bar\omega}=\operatorname{ext}(d\bar z^{\bar\beta})\left\{
\partial_{\bar\beta}+\bar\omega_{\bar\beta}\right\}\text{ and }
\delta^{\prime\prime}_\omega
=\operatorname{int}(dz^\alpha)\left\{-\partial_\alpha+\omega_\alpha\right\}+\mathcal{L}
$$
where $\mathcal{L}\in\End(\Lambda(M)\otimes E;\UU_{m,1})$ arises by taking the adjoint of $\bar\partial$
and is linear in the first derivatives of $g$ and of $h$;
$\mathcal{L}$ and $\operatorname{int}(dz^\alpha)$
lower the anti-holomorphic index $q$ by $1$ and $\operatorname{ext}(d\bar z^{\bar\beta})$ raises 
the anti-holomorphic index $q$ by $1$. We define:
\medbreak\qquad
$\mathcal{E}_1^{p,q}:=\operatorname{ext}(d\bar z^{\bar\beta})\operatorname{int}(dz^\alpha)\omega_{\alpha/\bar\beta}
\in\End(\Lambda^{p,q}(M)\otimes E;\VV_{m,2})$,
\smallbreak\qquad
$\mathcal{E}_2^{p,q}:=\operatorname{int}(dz^\alpha)\operatorname{ext}(d\bar z^{\bar\beta})\bar\omega_{\bar\beta/\alpha}
\in\End(\Lambda^{p,q}(M)\otimes E;\VV_{m,2})$,
\smallbreak\qquad
$\mathcal{L}^{p,q,\alpha}:=\operatorname{ext}(d\bar z^{\bar\beta})\partial_{\bar\beta}\{\operatorname{int}(dz^\alpha)\}\in
\End(\Lambda^{p,q}(M)\otimes E;\UU_{m,1})$,
\smallbreak\qquad
$\mathcal{L}^{p,q,\bar\beta}:=\operatorname{ext}(d\bar z^{\bar\beta})\mathcal{L}
+\mathcal{L}\operatorname{ext}(d\bar z^{\bar\beta})\in\End(\Lambda^{p,q}(M)\otimes E;\UU_{m,1})$,
\smallbreak\qquad
$\mathcal{Q}^{p,q}:=\operatorname{ext}(d\bar z^{\bar\beta})\partial_{\bar\beta}\{\mathcal{L}\}
\in\End(\Lambda^{p,q}(M)\otimes E;\UU_{m,2})$.
\medbreak\noindent
Since the matrices $\operatorname{ext}(dz^{\bar\beta})$ are constant with respect to the coordinate frame, we
do not need to introduce their derivatives. Note
that $\mathcal{E}^{p,q}_1$ and $\mathcal{E}^{p,q}_2$ are invariantly defined while $\mathcal{L}^{p,q,\alpha}$,
$\mathcal{L}^{p,q,\bar\beta}$, and $\mathcal{Q}^{p,q}$ are not invariantly defined but rather
depend on $(\vec z,\vec s)$. 
We use the identity
$$
\operatorname{int}(dz^\alpha)\operatorname{ext}(d\bar z^{\bar\beta})+
\operatorname{ext}(d\bar z^{\bar\beta})\operatorname{int}(dz^\alpha)=g^{\alpha\bar\beta}
$$
to express
$\Delta^{p,q}:=2\delta^{\prime\prime}_\omega\partial_{\bar\omega}+
2\partial_{\bar\omega}\delta^{\prime\prime}_\omega$
in the form:
\goodbreak\medbreak\qquad$\Delta^{p,q}=2g^{\alpha\bar\beta}\{-\partial_\alpha\partial_{\bar\beta}+
\omega_\alpha\partial_{\bar\beta}
-\omega_{\bar\beta}\partial_\alpha+\omega_\alpha\bar\omega_{\bar\beta}\}
+2\mathcal{E}^{p,q}_1-2\mathcal{E}^{p,q}_2+2\mathcal{Q}^{p,q}$
\smallbreak\hspace{1.8cm}$
+2\mathcal{L}^{p,q,\alpha}(-\partial_\alpha+\omega_\alpha)+2\mathcal{L}^{p,q,\bar\beta}(\partial_{\bar\beta}+\bar\omega_{\bar\beta})$.
\medbreak\noindent Fix a point $P$ of $M$ and let
$\xi:=\omega_\alpha(P)z^\alpha-\bar\omega_{\bar\beta}(P)z^{\bar\beta}$. Since $\xi$ is purely imaginary,
$\Delta^{p,q}_\xi:=e^{-\xi}\Delta^{p,q}e^\xi$ is defined by a unitary change of gauge. We compute
\medbreak\quad
$\Delta^{p,q}_\xi=2g^{\alpha\bar\beta}
\{-\partial_\alpha\partial_{\bar\beta}+(\omega_\alpha-\omega_\alpha(P))\partial_{\bar\beta}
-(\bar\omega_{\bar\beta}-\bar\omega_{\bar\beta}(P))\partial_\alpha\}$
\smallbreak\hspace{1.3cm}$+2g^{\alpha\bar\beta}
(\omega_\alpha-\omega_\alpha(P))(\bar\omega_{\bar\beta}-\bar\omega_{\bar\beta}(P))
+2\mathcal{E}^{p,q}_1-2\mathcal{E}^{p,q}_2+2\mathcal{Q}^{p,q}$
\smallbreak\hspace{1.3cm}$
+2\mathcal{L}^{p,q,\alpha}(-\partial_\alpha+\omega_\alpha-\omega_\alpha(P))
+2\mathcal{L}^{p,q,\bar\beta}(\partial_{\bar\beta}+\bar\omega_{\bar\beta}-\bar\omega_{\bar\beta}(P))$.
\medbreak\noindent Since $\Delta^{p,q}$ and $\Delta_\xi^{p,q}$ differ by a gauge transformation,
$a_{m,n}(\Delta^{p,q})=a_{m,n}(\Delta_\xi^{p,q})$. It is immediate that the symbol of $\Delta_\xi^{p,q}$
and all the derivatives of the symbol
of $\Delta_\xi^{p,q}$ at $P$ do not involve the $\omega_\alpha$ and $\bar\omega_{\bar\beta}$ variables. Consequently,
$a_{m,n}(\Delta_\xi^{p,q})\in\VV_{m,n}$.
\end{proof}

\begin{remark}\rm We have exploited a fundamental difference between the deformed real 
Laplacian and the deformed complex Laplacian
that relates to gauge freedom. We illustrate this as follows. Suppose $m=1$. We work on the circle. 
Let $\omega=adx$ for $a\in\mathbb{R}$.
Then $d_a=\partial_x+a$ and $\delta_a=-\partial_x+a$;
$\Delta_a^0=\Delta_a^1=\delta_ad_a=-(\partial_x^2+a^2)$ is already normalized optimally since there 
is no first order term and, unlike in the complex setting,
it is not possible to make a change of gauge that eliminates the dependence
of the symbol on $a$.
\end{remark}

\subsection{Conformal variations}\label{S6.3}

If $g$ is a K\"ahler metric,
then the conformal variation $e^{2\varepsilon}g$ need no longer be K\"ahler. However, this 
variation remains within the class of Hermitian manifolds.
Since Equation~(\ref{E1.a}) continues to hold for Hermitian manifolds, 
the argument given to prove Lemma~\ref{L5.1} shows that
we may express $a_{m-2,m}^{\operatorname{Dol}}=\delta Q_{m-2,m-1}^1$ in the class of Hermitian and
hence in the class of K\"ahler manifolds. Thus we may choose $Q_{m,m-1}^1$
so that
$$
\mathfrak{a}_{m,m}^{\operatorname{Dol}}-\delta Q_{m,m-1}^1\in\ker(r)\,.
$$
The integration by parts procedure discussed in Section~\ref{S5} arises from a conformal variation of $g$; thus
although additional derivatives of $\omega$ may be introduced, the number of
derivatives of $\omega$ are not reduced.
Thus $Q\in\VV_{m,m-1}^1$ and we have $\mathfrak{a}_{m,m}^{\operatorname{Dol}}-\delta Q\in\VV_{m,m}$.
Let
$$
\SS_m:=\mathbb{C}[\ch_k(TM,J,g),\ch_k(E,h),d\omega,d\bar\omega]\subset\TT_m
$$
be obtained by omitting the $\{\omega_\alpha,\bar\omega_{\bar\beta}\}$ variables. We restrict to the K\"ahler setting to obtain
$$
\mathfrak{a}_{m,m}^{\operatorname{Dol}}-\delta Q\in\star\{{\TT_m^m}\}\cap\VV_{m,m}=\star\SS_m^m\,.
$$
Since we have omitted the $\{\omega_\alpha,\bar\omega_{\bar\beta}\}$ variables, and since
$\ch_k$ is a closed differential form, we complete the proof of Theorem~\ref{T1.3} by computing
\begin{eqnarray*}
\SS_m^m&=&\CC_m^m+d\{\omega\wedge\SS_{m}^{m-2}+\bar\omega\wedge\SS_{m}^{m-2}\},\\
\star\SS_m^m&=&\star\CC_m^m+\delta\star\{\omega\wedge\SS_{m}^{m-2}+\bar\omega\wedge\SS_{m}^{m-2}\}\,.
\end{eqnarray*}

\section{The proof of Theorem~\ref{T1.4}}\label{S7} 
Let $\Delta^{p,q}_E$ be the complex Laplacian with coefficients in $E$.
Let $\Delta^n$ be the real Laplacian. If $P$ is a local invariant, let $P[\mathcal{M}]=\int_MP(\mathcal{M})\dvol$.
We will  use the following resuls in our analysis.
Assertion~1 is immediate from the definition, 
Assertions~(2,3) are Serre-duality, and Assertions~(4,5) follow
by specializing the Hirzebruch-Riemann-Roch Theorem.

\begin{theorem}\label{T7.1} Let $E$ be a holomorphic vector bundle over a K\"ahler manifold $\mathcal{M}$.
\ \begin{enumerate}
\item $\Delta^{p,q}_E=\Delta^{0,q}_{\Lambda^{p,0}\otimes E}$.
\item There is a conjugate linear isomorphism
intertwining $\Delta^{p,q}_E$ and $\Delta^{m-p,m-q}_{E^*}$.
\item Complex conjugation intertwines $\Delta^{p,q}$ and $\Delta^{q,p}$;
$\Delta^n=\oplus_{p+q=n}\Delta^{p,q}$.
\item If m=2, $\operatorname{index}(\bar\partial)=\{\frac12c_1(T_cM)\ch_0(E)+c_1(E)\}[M]$.
\item If m=4, $\operatorname{index}(\bar\partial)=\{\Td_2(T_cM)\ch_0(E)+\ch_1(T_cM)\ch_1(E)+\ch_2(E)\}[\mathcal{M}]$.
\end{enumerate}\end{theorem}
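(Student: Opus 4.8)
The plan is to establish the five assertions in the three groups indicated in the text: Assertion~(1) is a bundle identification, Assertions~(2,3) are the Hodge-theoretic incarnations of Serre duality and the K\"ahler identities, and Assertions~(4,5) are low-dimensional specializations of Hirzebruch--Riemann--Roch. For Assertion~(1) I would use the canonical isomorphism
\[\Lambda^{p,q}(M)\otimes E\cong\Lambda^{0,q}(M)\otimes(\Lambda^{p,0}(M)\otimes E)\,.\]
The essential point is that $\Lambda^{p,0}(M)$ is a \emph{holomorphic} vector bundle, so that $\Lambda^{p,0}(M)\otimes E$ is again holomorphic and inherits a Hermitian fiber metric from $g$ and $h$. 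Under the displayed identification the operator $\bar\partial$ on $E$-valued $(p,q)$-forms is carried to $\bar\partial$ on $(0,q)$-forms with values in $\Lambda^{p,0}(M)\otimes E$, and the induced metrics agree; passing to formal adjoints and forming $\bar\partial\delta''+\delta''\bar\partial$ then yields the equality of Laplacians. This is immediate once the identification is set up.

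For Assertions~(2,3) I would invoke Hodge theory on the compact K\"ahler manifold $\mathcal{M}$. For~(2), the conjugate-linear Hodge star operator $\overline{\star}_E$ carries $E$-valued $(p,q)$-forms to $E^*$-valued forms of the complementary bidegree and satisfies, up to sign, $\delta''_{E^*}=-\,\overline{\star}_E\,\bar\partial_E\,\overline{\star}_E$; hence $\overline{\star}_E$ intertwines the two Laplacians of Assertion~(2), which is Serre duality realized at the level of Laplacians. For~(3), complex conjugation interchanges $\bar\partial$ and $\partial$ and therefore carries the $\bar\partial$-Laplacian in bidegree $(p,q)$ to the $\partial$-Laplacian in bidegree $(q,p)$; the K\"ahler identity equating the $\partial$- and $\bar\partial$-Laplacians then identifies the latter with $\Delta^{q,p}$. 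The splitting $\Delta^n=\oplus_{p+q=n}\Delta^{p,q}$ is likewise a K\"ahler identity: the Hodge Laplacian on $n$-forms preserves the $(p,q)$-decomposition of $\Lambda^n\otimes\mathbb{C}$ and restricts on each summand to the Dolbeault Laplacian $\Delta^{p,q}$, the factor of $2$ in the normalization $\Delta_{\mathcal{K}}=2(\bar\partial_{\bar\omega}\delta''_\omega+\delta''_\omega\bar\partial_{\bar\omega})$ being exactly what matches the two sides.

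Finally, for Assertions~(4,5) I would begin from
\[\operatorname{index}(\bar\partial)=\sum_q(-1)^q\dim H^q(M,\mathcal{O}(E))=\chi(M,E)\]
and apply Hirzebruch--Riemann--Roch in the form $\chi(M,E)=\{\Td(T_cM)\wedge\ch(E)\}[\mathcal{M}]$. It then remains only to extract the component of top degree. When $m=2$ (so $\mm=1$) the surviving terms are $\Td_1\ch_0+\Td_0\ch_1$, and when $m=4$ (so $\mm=2$) they are $\Td_2\ch_0+\Td_1\ch_1+\Td_0\ch_2$; substituting the explicit entries $\Td_1,\Td_2,\ch_1,\ch_2$ recorded in the table of Section~1 and collecting terms yields the stated characteristic numbers. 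These are routine expansions. I anticipate no substantive obstacle anywhere in the theorem; the only genuine care lies in fixing the sign convention for the Hodge star in Assertion~(2) and in tracking the factor of $2$ in Assertion~(3), both of which must be kept consistent with the normalization of $\Delta_{\mathcal{K}}$ fixed earlier in the paper.
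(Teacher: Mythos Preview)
Your proposal is correct and mirrors the paper's own treatment: the paper does not give a detailed proof but simply records that Assertion~(1) is immediate from the definition, Assertions~(2,3) are Serre duality, and Assertions~(4,5) follow by specializing Hirzebruch--Riemann--Roch, which is precisely the outline you have fleshed out. Your added remark that Assertion~(3) also relies on the K\"ahler identities (to match the $\partial$- and $\bar\partial$-Laplacians and to obtain the bidegree decomposition of $\Delta^n$) is a useful clarification that the paper's one-line attribution to ``Serre-duality'' elides.
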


We use Leibnitz's formula to establish the following result.

\begin{lemma}\label{L7.2} Let $\mathcal{M}_j$ have real dimension $m_j$. Then
$$\aa_{m_1+m_2,m_1+m_2}[\mathcal{M}_1\times\mathcal{M}_2]=
\aa_{m_1,m_1}[\mathcal{M}_1]
 a_{m_2,m_2}^{\Dol}[\mathcal{M}_2]
+a_{m_1,m_1}^{\Dol}[\mathcal{M}_1]
\aa_{m_2,m_2}[\mathcal{M}_2]\,.$$
\end{lemma}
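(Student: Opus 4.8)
The plan is to combine the factorization of the deformed Dolbeault complex on a product with the multiplicativity of the heat kernel, and then to isolate the two surviving terms using the Bott cancellation recorded in Equation~(\ref{E1.a}); here ``Leibnitz'' refers both to the multiplicativity of the small-time heat expansion and to the elementary splitting $p=p_1+p_2$ that produces a derivation.

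First I would record the product decomposition. Give $\mathcal{M}_1\times\mathcal{M}_2$ the product K\"ahler structure, with metric $g_1\oplus g_2$, complex structure $J_1\oplus J_2$, twisting bundle the external tensor product of the $E_j$, and auxiliary $(1,0)$ form $\pi_1^*\omega_1+\pi_2^*\omega_2$. Then
\[
\Lambda^{0,p}(M_1\times M_2)\otimes E=\bigoplus_{p_1+p_2=p}\bigl(\Lambda^{0,p_1}(M_1)\otimes E_1\bigr)\boxtimes\bigl(\Lambda^{0,p_2}(M_2)\otimes E_2\bigr),
\]
and $\bar\partial_{\bar\omega}$ and $\delta^{\prime\prime}_\omega$ decompose on this graded tensor product with the usual Koszul signs, so that the cross terms in $\bar\partial_{\bar\omega}\delta^{\prime\prime}_\omega+\delta^{\prime\prime}_\omega\bar\partial_{\bar\omega}$ cancel and
\[
\Delta^{0,p}_{\mathcal{M}_1\times\mathcal{M}_2}=\bigoplus_{p_1+p_2=p}\bigl(\Delta^{0,p_1}_{\mathcal{M}_1}\otimes\id+\id\otimes\Delta^{0,p_2}_{\mathcal{M}_2}\bigr).
\]

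Next I would feed in $e^{-t(\Delta^{0,p_1}_{\mathcal{M}_1}\otimes\id+\id\otimes\Delta^{0,p_2}_{\mathcal{M}_2})}=e^{-t\Delta^{0,p_1}_{\mathcal{M}_1}}\otimes e^{-t\Delta^{0,p_2}_{\mathcal{M}_2}}$. Taking the pointwise trace on the diagonal and multiplying the two expansions $\operatorname{tr}K_j(t;x_j,x_j)\sim\sum_{n_j}a_{m_j,n_j}(x_j,\Delta^{0,p_j}_{\mathcal{M}_j})\,t^{(n_j-m_j)/2}$ gives, for $x=(x_1,x_2)$ and $m=m_1+m_2$,
\[
a_{m,n}\bigl(x,\Delta^{0,p_1}_{\mathcal{M}_1}\otimes\id+\id\otimes\Delta^{0,p_2}_{\mathcal{M}_2}\bigr)=\sum_{n_1+n_2=n}a_{m_1,n_1}(x_1,\Delta^{0,p_1}_{\mathcal{M}_1})\,a_{m_2,n_2}(x_2,\Delta^{0,p_2}_{\mathcal{M}_2}).
\]
Now I would sum over $p_1+p_2=p$, multiply by $(-1)^pp=(-1)^{p_1+p_2}(p_1+p_2)$, sum over $p$, set $n=m$, and integrate over $M_1\times M_2$ by Fubini. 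Splitting $p_1+p_2$ factors the resulting double sum; writing $[\,\cdot\,]$ for integration over the relevant factor, and using $\sum_{p_j}(-1)^{p_j}p_j\,a_{m_j,n_j}(\cdot)[\mathcal{M}_j]=\aa_{m_j,n_j}[\mathcal{M}_j]$ together with $\sum_{p_j}(-1)^{p_j}a_{m_j,n_j}(\cdot)[\mathcal{M}_j]=a^{\Dol}_{m_j,n_j}[\mathcal{M}_j]$, one obtains
\[
\aa_{m,m}[\mathcal{M}_1\times\mathcal{M}_2]=\sum_{n_1+n_2=m}\Bigl(\aa_{m_1,n_1}[\mathcal{M}_1]\,a^{\Dol}_{m_2,n_2}[\mathcal{M}_2]+a^{\Dol}_{m_1,n_1}[\mathcal{M}_1]\,\aa_{m_2,n_2}[\mathcal{M}_2]\Bigr).
\]

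Finally, Equation~(\ref{E1.a}) gives $a^{\Dol}_{m_j,n_j}[\mathcal{M}_j]=0$ unless $n_j=m_j$. Hence in the first family of terms the factor $a^{\Dol}_{m_2,n_2}[\mathcal{M}_2]$ forces $n_2=m_2$ and so $n_1=m_1$, while in the second the factor $a^{\Dol}_{m_1,n_1}[\mathcal{M}_1]$ forces $n_1=m_1$ and so $n_2=m_2$; only the diagonal terms survive, which is precisely the asserted identity. The one point requiring genuine care is the vanishing of the cross terms of the deformed Laplacian on the graded tensor product, i.e.\ verifying that $\bar\partial_{\bar\omega}$ and $\delta^{\prime\prime}_\omega$ on $\mathcal{M}_1\times\mathcal{M}_2$ are assembled from those on the factors with the correct signs so that $\Delta^{0,p}$ is a direct sum of commuting tensor operators; granting this, and granting that the heat expansions may be multiplied and integrated term by term, the rest is bookkeeping.
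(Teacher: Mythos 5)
Your proof is correct and follows essentially the same route as the paper: the paper packages your splitting of $(-1)^{p_1+p_2}(p_1+p_2)$ into the derivative at $s=1$ of the generating function $\AA_{m,n}[\mathcal{M}](s)=\sum_p(-1)^pa_{m,n}(\Delta^{(0,p)})[\mathcal{M}]s^p$, but the product formula for the heat trace asymptotics and the use of Equation~(\ref{E1.a}) to kill the off-diagonal terms $n_j\ne m_j$ are identical. Your explicit verification of the tensor decomposition of $\Delta^{0,p}$ on the product is a point the paper leaves implicit.
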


\begin{proof}Set $\AA_{m,n}[\mathcal{M}](s):=\sum_p(-1)^pa_{m,n}(\Delta^{(0,p)})[\mathcal{M}]s^p$.
We then have
$$
\left.a_{m,m}^{\Dol}[\mathcal{M}]=\AA_{m,m}[\mathcal{M}](s)\right|_{s=1}\text{ and }
\left.\aa_{m,m}[\mathcal{M}]=\partial_s\{\AA_{m,m}[\mathcal{M}]\}\right|_{s=1}\,.
$$
We have the following product formula for the heat trace asymptotics:
\begin{eqnarray*}
&&a_{m,n}(\Delta^{(0,p)})[\mathcal{M}_1\times\mathcal{M}_2]\\
&&\qquad=\sum_{p_1+p_2=p}\sum_{n_1+n_2=n}a_{m_1,n_1}(\Delta^{(0,p_1)})[\mathcal{M}_1]\cdot
a_{m_2,n_2}(\Delta^{(0,p_2)})[\mathcal{M}_2]\,.
\end{eqnarray*}
This then yields that
\begin{equation}\label{E7.a}
\AA_{m,n}[\mathcal{M}_1\times\mathcal{M}_2](s)=\sum_{n_1+n_2=n}\AA_{m_1,n_1}[\mathcal{M}_1](s)\cdot
\AA_{m_2,n_2}[\mathcal{M}_2](s)\,.
\end{equation}
\medbreak\noindent We differentiate Equation~(\ref{E7.a}) and set $s=1$  to obtain
$$
\aa_{m,m}[\mathcal{M}]=\sum_{n_1+n_2=m}
\aa_{m_1,n_1}[\mathcal{M}_1]\cdot a^{\Dol}_{m_2,n_2}[\mathcal{M}_2]+
a^{\Dol}_{m_1,n_1}[\mathcal{M}_1]\cdot\aa_{m_2,n_2}[\mathcal{M}_2]\,.
$$
We have that
$a^{\Dol}_{m_1,n_1}[\mathcal{M}_1]=0$ if $m_1\ne n_1$ and
$a^{\Dol}_{m_2,n_2}[\mathcal{M}_2]=0$ if $m_2\ne n_2$.
We may therefore safely set $n_1=m_1$ and $n_2=m_2$ in the above identity
to complete the proof of Lemma~\ref{L7.2}.
\end{proof}

The heat trace invariants are additive with respect to direct sums, i.e.
\begin{eqnarray*}
&&a_{m,n}(\Delta^{p,q})(M,g,J,E_1\oplus E_2,h_1\oplus h_2)\\
&=&
a_{m,n}(\Delta^{p,q})(M,g,J,E_1,h_1)+a_{m,n}(\Delta^{p,q})(M,g,J,E_2,h_2)\,.
\end{eqnarray*}
Since $a_{m,m}^{\Dol}$ and $\aa_{m,m}$
can be expressed in terms of characteristic classes, only the Chern character $\ch(E)$ enters. 
If $E$ is a line bundle, then $\ch_k(E,h)=\frac1{k!}c_1^k(E,h)$. 

\subsection{The proof of Theorem~\ref{T1.4}~(1a)}Suppose that $m=2$. 
There exist constants $\alpha^q$ and $\beta^q$ so that
$a_{2,2}(\Delta_E^{0,q})=\alpha^qc_1(T_cM)\ch_0(E)+\beta^qc_1(E)$.
We have $c_1(E^*)=-c_1(E)$. Since
$\Lambda^{1,0}(M)$ is the dual of $T_cM$, $c_1(\Lambda^{1,0}M)=-c_1(T_cM)$. We suppose $\dim E=1$ and
use Theorem~\ref{T7.1} to compute:
\begin{eqnarray*}
&&\{\alpha^1c_1(T_cM)+\beta^1c_1(E)\}[\mathcal{M}]=a_{2,2}(\Delta^{0,1}_E)[\mathcal{M}]\\
&&\qquad=a_{2,2}(\Delta^{1,0}_{E^*})[\mathcal{M}]=a_{2,2}(\Delta^{0,0}_{\Lambda^{1,0}(M)\otimes E^*})[\mathcal{M}]\\
&&\qquad=\{\alpha^0c_1(T_cM)+\beta^0c_1(\Lambda^{1,0}(M))+\beta^0c_1(E^*)\}[\mathcal{M}]\\
&&\qquad=\big\{(\alpha^0-\beta^0)c_1(T_cM)-\beta^0c_1(E)\}\big[\mathcal{M}]\,.
\end{eqnarray*}
Consequently, $\alpha^1=\alpha^0-\beta^0$ and $\beta^1=-\beta^0$. Therefore,
\begin{eqnarray*}
&&\operatorname{index}(\bar\partial_E)=\textstyle\frac12\{c_1(T_cM)+c_1(E)\}[\mathcal{M}]
=a_{2,2}^{\Dol}[\mathcal{M}]\\
&&\qquad=\{(\alpha^0-\alpha^1)c_1(T_cM)+(\beta^0-\beta^1)c_1(E)\}[\mathcal{M}]\\
&&\qquad=\{\beta^0c_1(T_cM)+2\beta^0c_1(E)\}[\mathcal{M}]\,.
\end{eqnarray*}
Consequently, $\beta^0=\frac12$ and $\beta^1=-\frac12$. 
Let $\mathcal{S}^2$ be the unit sphere in $\mathbb{R}^3$ with the usual metric and complex structure;
$\tau=2$
and $\operatorname{vol}(\mathcal{S}^2)=4\pi$. We compute
$$\textstyle\frac1{8\pi}\tau[\mathcal{S}^2]=1=\operatorname{index}(\bar\partial)[\mathcal{S}^2]=\frac12c_1[\mathcal{S}^2]$$
so $c_1(\mathcal{S}^2)=\frac1{4\pi}\tau$. Take $E$ trivial. McKean and Singer~\cite{MS67} (see also Patodi~\cite{P71}) 
computed $a_{2,2}(\Delta^p)$. Together with
Theorem~\ref{T7.1}, this shows
\begin{eqnarray*}
&&\textstyle a_{2,2}(\Delta^{0,0})=a_{2,2}(\Delta^0)=\frac1{24\pi}\tau=\frac16c_1(T_c),\\
&&\textstyle a_{2,2}(\Delta^{0,1})=\frac12 a_{2,2}(\Delta^1)=\frac12\frac1{24\pi}(-4\tau)=-\frac13c_1(T_c)\,.
\end{eqnarray*}
Thus $\alpha^0=\frac16$ and $\alpha^1=\alpha^0-\beta^0=\frac16-\frac12=-\frac13$.~\qed
\subsection{The proof of Theorem~\ref{T1.4} (1b)} Suppose $m=4$. Let $\mathcal{M}=(M,g,J,E,h)$ be a complex surface.
There exist constants $\alpha$ and $\beta$ and a characteristic class $P_2(T_cM)$ so that
$$
\aa_{4,4}[\mathcal{M}]=\{P_2(T_cM)\ch_0(E)+\alpha c_1(T_cM)c_1(E)+\beta\ch_2(E)\}[\mathcal{M}]\,.
$$
Let $\mathcal{M}=\mathcal{M}_1\times\mathcal{M}_2$. Suppose first that $\mathcal{M}_i$ are
flat tori and $(E_i,h_i)$ are line bundles with $c_1(E_i)[\mathcal{M}_i]\ne0$. 
Then
\begin{eqnarray*}
&&c_1(T_c(M))[\mathcal{M}]=0,\qquad \{P_2(T_cM)\ch_0(E)\}[\mathcal{M}]=0,\\
&&\ch_2(E_1\otimes E_2)[\mathcal{M}]=
c_1(E_1)[\mathcal{M}_1]\cdot c_1(E_2)[\mathcal{M}_2]\,.
\end{eqnarray*}
Thus Theorem~\ref{T7.1}, Lemma~\ref{L7.2} and Theorem~\ref{T1.4}~(1a) yield
\begin{eqnarray*}
&&\beta c_1(E_1)[\mathcal{M}_1]\cdot c_1(E_2)[\mathcal{M}_2]=\beta\ch_2(E_1\otimes E_2)[\mathcal{M}]=
\mathfrak{a}_{4,4}^{\Dol}[\mathcal{M}]\\
&&\qquad=\mathfrak{a}_{2,2}^{\Dol}[\mathcal{M}_1]
\cdot a_{2,2}^{\Dol}[\mathcal{M}_2]
+a_{2,2}^{\Dol}[\mathcal{M}_1]
\cdot\mathfrak{a}_{2,2}^{\Dol}[\mathcal{M}_2]\\
&&\qquad=\textstyle\frac12c_1(E_1)[\mathcal{M}_1]\cdot c_1(E_2)[\mathcal{M}_2]
+c_1(E_1)[\mathcal{M}_1]\cdot\frac12c_1(E_2)[\mathcal{M}_2]\\
&&\qquad=c_1(E_1)[\mathcal{M}_1]\cdot c_1(E_2)[\mathcal{M}_2]\,.
\end{eqnarray*}
This shows that $\beta=1$. Suppose $M_1=\mathcal{S}^2$, $E_1$ is trivial, $M_2$ is the flat torus,
and $c_1(E_2)[\mathcal{M}_2]\ne0$. Then
$
P_2(T_cM)\ch_0(E)=P_2(T_cM_1\oplus\pone)\ch_0(E)=0$ and $\ch_2(E)=\ch_2(E_2)=0\,.
$
Thus
\begin{eqnarray*}
&&\alpha c_1(T_cM_1)[\mathcal{M}_1]\cdot c_1(E_2)[\mathcal{M}_2]=\aa_{4,4}[\mathcal{M}]\\
&&\qquad=\aa_{2,2}[\mathcal{M}_1]
\cdot a_{2,2}^{\Dol}[\mathcal{M}_2]
+a_{2,2}^{\Dol}[\mathcal{M}_1]
\cdot\aa_{2,2}[\mathcal{M}_2]\\
&&\textstyle\qquad=\frac13c_1(T_cM_1)[\mathcal{M}_1]\cdot c_1(E_2)[\mathcal{M}_2]
+\frac12c_1(T_cM_1)[\mathcal{M}_1]\cdot\frac12c_1(E_2)[\mathcal{M}_2]\,.
\end{eqnarray*}
This shows that $\alpha=\frac13+\frac14=\frac7{12}$. Therefore, we have that
\begin{equation}\label{E7.b}
\textstyle\aa_{4,4}[\mathcal{M}]=\{P_2(T_cM)\ch_0(E)+\frac7{12}c_1(T_cM)c_1(E)+\ch_2(E)\}[\mathcal{M}]\,.
\end{equation}
We now use Serre duality. Let $E=\pone\oplus\Lambda^{2,0}(T_cM)$. We have
\begin{equation}\begin{array}{l}\label{E7.c}
\aa_{4,4}(\mathcal{M})=
-a_{4,4}(\Delta^{0,1}_{\pone\oplus\Lambda^{2,0}})+2a_{4,4}(\Delta^{0,2}_{\pone\oplus\Lambda^{2,0}})\\
\qquad=-a_{4,4}(\Delta^{2,1}_{\pone\oplus(\Lambda^{2,0})^*})
+2a_{4,4}(\Delta^{2,0}_{\pone\oplus(\Lambda^{2,0})^*})\\
\qquad=-a_{4,4}(\Delta^{0,1}_{\Lambda^{2,0}\oplus 1})+2a_{4,4}(\Delta^{0,0}_{\Lambda^{2,0}\oplus\pone})
\end{array}\end{equation}
We add the expressions of the second and third lines of Equation~(\ref{E7.c}) to see
\begin{equation}\label{E7.d}
2\aa_{4,4}(\mathcal{M})=2a_{4,4}(\Delta^{0,0}_E)-2a_{4,4}(\Delta^{0,1}_E)+2a_{4,4}(\Delta^{0,2}_E)
=2a_{4,4}^{\Dol}(\mathcal{M})\,.
\end{equation}
We use Equation~(\ref{E7.b}), Equation~(\ref{E7.d}), and Theorem~\ref{T1.4} to see
\begin{eqnarray*}
&&\textstyle\{P_2(T_cM)\ch_0(E)+\frac7{12}c_1(T_cM)c_1(E)+\ch_2(E)\}[\mathcal{M}]\\
&=&\textstyle\{\Td_2(T_cM)\ch_0(E)+\frac12c_1(T_cM)c_1(E)+\ch_2(E)\}[\mathcal{M}]\,.
\end{eqnarray*}
We have $\ch_0(E)=2$ and $\ch_1(E)=c_1(\Lambda^{2,0})=-c_1(T_cM)$. Consequently
\medbreak
$P_2(T_cM)=\Td_2(T_cM)+\frac12(\frac12-\frac7{12})(-c_1^2(T_cM))=\Td_2(T_cM)+\frac1{24}c_1^2(T_cM)$.~\qed

\subsection{The proof of Theorem~\ref{T1.4}~(2a,2b)}
Let $\mathcal{M}:=\mathcal{N}_1\times\dots\times\mathcal{N}_\mm$ where
$(N_i,g_i,J_i)$ are flat tori of real dimension 2 and $(E_i,h_i)$ are Hermitian line bundles over $N_i$.
Let $E=E_1\otimes\dots\otimes E_{\mm}$. Then
$\ch_\mm(E)=c_1(E_1)\dots c_1(E_\mm)$. We apply Lemma~\ref{L7.2} and Theorem~\ref{T1.4}
to prove Theorem~\ref{T1.4}~(2a) by computing
\medbreak\qquad
$a_{2,2}^{\Dol}[\mathcal{M}]=\prod_ic_1(E_i)[\mathcal{N}_i]$,
\qquad
$\textstyle\AA_{2,2}(s)[\mathcal{N}_i]=\frac12(1+s)c_1(E_i)[\mathcal{N}_i]$,
\medbreak\qquad
$\aa_{m,m}[\mathcal{M}]=\partial_s\left\{\AA_{m,m}(s)[\mathcal{M}]\right\}|_{s=1}=\left.\partial_s\left\{2^{-\mm}(1+s)^\mm\prod_i c_1(E_i)[\mathcal{N}_i]\right\}\right|_{s=1}$
\medbreak\qquad$\hspace{1.5cm}
=\frac12\mm\ch_k(E)[\mathcal{M}]=\frac12\mm _{2,2}^{\Dol}[\mathcal{M}]$. 
\medbreak\noindent
Similarly, let $\mathcal{M}_i$ be arbitrary Riemann surfaces and let $(E_i,h_i)$ be trivial.
We complete the proof of Theorem~\ref{T1.4}~(2b) by computing
\medbreak\qquad
$a_{2,2}^{\Dol}[\mathcal{M}]=2^{-\mm}\prod_ic_1(T_c)[\mathcal{M}_i]$,
\medbreak\qquad
$\aa_{m,m}[\mathcal{M}]=\partial_s\left\{\AA_{m,m}(s)[\mathcal{M}]\right\}|_{s=1}=\partial_s\left\{6^{-\mm}(1+2s)^\mm\prod_i c_1(T_c)[\mathcal{M}_i]\right\}|_{s=1}$
\medbreak\qquad
$\phantom{\aa[\mathcal{M}]}
=\frac13\mm2^{-\mm+1}\prod_i c_1(T_c)[\mathcal{M}_i]=\frac23\mm a_{m,m}^{\Dol}[\mathcal{M}]$.~\qed

\subsection*{Research Support} Research partially supported by grants PID2019-105138GB-C21 (Spain) and MTM2017-89686-P (AEI/FEDER, UE).
\subsection*{This paper is dedicated to the memory of Professor L. Nirenberg}\ 
Gilkey writes: 
Profesor L. Nirenberg  was my thesis advisor and mentor.
My thesis dealt with a heat equation proof of the index theorem for geometrical elliptic complexes; one of
the chapters dealt with the Dolbeault complex in the K\"ahler setting. Much has happened since I received
my Ph.~D.\ in 1972 from Harvard and I have revisited the topic several times subsequently. But it seems appropriate
to revisit the topic for what I expect will be one final time with my colleague Professor \'Alvarez L\'opez who is
an expert on the Witten deformation and a valued collaborator and friend.
The years 1969--1972 when I was on ``travelling guidance" from Harvard studying with Professor Nirenberg at NYU
were tumultuous ones as that was the Vietnam war era with an occupation of the Courant Institute and many anti-war
protests. Professor Nirenberg 
helped me obtain student deferments that enabled me to complete
my degree and on those grounds alone I owe him a debt 
of gratitude I can never repay. But in addition,
he offered me wise professional and personal
guidance and in all manner enriched my life and helped me launch my mathematical career.
It is an honor to dedicate this paper to his memory as I near the end of my
own career in mathematics. He launched me on a wonderful journey through life.

\'Alvarez L\'opez writes: It is a great honor for me to participate in a scientific contribution to the memory of Louis Nirenberg. The first time I heard his name was in a PhD course, around 1985, studying the celebrated Newlander-Nirenberg integrability condition for the Dolbeault complex. After that I continued to discover the enormous importance and variety of his achievements, which made him one of the most relevant mathematicians of the 20th century. It was a very pleasant surprise when Peter Gilkey told me that Louis Nirenberg had been his thesis advisor. I was fortunate to collaborate with Peter Gilkey, and to have that indirect connection with Louis Nirenberg.

\end{document}